\theoremstyle{plain}
\newtheorem{thm}{\protect\theoremname}
\crefname{thm}{Theorem}{Theorems}
\theoremstyle{definition}
\theoremstyle{definition}
\theoremstyle{plain}
\theoremstyle{plain}
\theoremstyle{plain}
\newtheorem{lem}[thm]{\protect\lemmaname}
\crefname{lem}{Lemma}{Lemmas}
\theoremstyle{plain}
\theoremstyle{remark}
\newtheorem*{rem*}{\protect\remarkname}
\theoremstyle{remark}
\crefname{enumi}{Part}{Parts}
\let\originalleft\left
\let\originalright\right
\renewcommand{\left}{\mathopen{}\mathclose\bgroup\originalleft}
\renewcommand{\right}{\aftergroup\egroup\originalright}
\renewcommand*{\UrlTildeSpecial}{%
  \do\~{%
    \mbox{%
      \fontfamily{ptm}\selectfont
      \textasciitilde
    }%
  }%
}%
\let\Url@force@Tilde\UrlTildeSpecial
\tikzstyle{vertex}=[circle,draw=black,fill=black,inner sep=0,minimum size=0.2cm,text=white,font=\footnotesize]
\tikzset{arc/.style={
  postaction={decorate}
}}
\tikzset{every loop/.style={min distance=50,in=50,out=130,looseness=7}}
\tikzset{
  larrow/.style={
    shape border rotate=90,
    regular polygon,
    regular polygon sides=3,
    fill=black,
    inner sep=0,
    minimum height=2.3mm
  }
}
\tikzset{
  rarrow/.style={
    shape border rotate=-90,
    regular polygon,
    regular polygon sides=3,
    fill=black,
    inner sep=0,
    minimum height=2.3mm
  }
}
  \providecommand{\claimname}{Claim}
  \providecommand{\conjecturename}{Conjecture}
  \providecommand{\corollaryname}{Corollary}
  \providecommand{\definitionname}{Definition}
  \providecommand{\examplename}{Example}
  \providecommand{\lemmaname}{Lemma}
  \providecommand{\propositionname}{Proposition}
  \providecommand{\remarkname}{Remark}
\providecommand{\theoremname}{Theorem}
\date{}
\begin{document}

\title{\texorpdfstring{\vspace{-1cm}}{}Cycles and matchings in randomly perturbed digraphs and hypergraphs}

\author{Michael Krivelevich
\thanks{School of Mathematical Sciences, Raymond and Beverly Sackler Faculty of Exact Sciences, Tel Aviv University, 6997801, Israel. Email: \href{mailto:krivelev@post.tau.ac.il}
{\nolinkurl{krivelev@post.tau.ac.il}}. Research supported in part by USA-Israel BSF Grant 2010115 and
by grant 912/12 from the Israel Science Foundation.}\and
Matthew Kwan\thanks{Department of Mathematics, ETH, 8092 Z\"urich. Email: \href{mailto:matthew.kwan@math.ethz.ch}
{\nolinkurl{matthew.kwan@math.ethz.ch}}.}\and
Benny Sudakov\thanks{Department of Mathematics, ETH, 8092 Z\"urich. Email: \href{mailto:benjamin.sudakov@math.ethz.ch}{\nolinkurl{benjamin.sudakov@math.ethz.ch}}. Research supported in part by SNSF grant 200021-149111.}}

\maketitle
\global\long\def\RR{\mathbb{R}}
\global\long\def\QQ{\mathbb{Q}}
\global\long\def\HH{\mathbb{H}}
\global\long\def\E{\mathbb{E}}
\global\long\def\Var{\operatorname{Var}}
\global\long\def\CC{\mathbb{C}}
\global\long\def\NN{\mathbb{N}}
\global\long\def\ZZ{\mathbb{Z}}
\global\long\def\GG{\mathbb{G}}
\global\long\def\BB{\mathbb{B}}
\global\long\def\DD{\mathbb{D}}
\global\long\def\cL{\mathcal{L}}
\global\long\def\supp{\operatorname{supp}}
\global\long\def\one{\boldsymbol{1}}
\global\long\def\range#1{\left[#1\right]}
\global\long\def\d{\operatorname{d}}
\global\long\def\falling#1#2{\left(#1\right)_{#2}}
\global\long\def\f{\mathbf{f}}
\global\long\def\im{\operatorname{im}}
\global\long\def\sp{\operatorname{span}}
\global\long\def\sign{\operatorname{sign}}
\global\long\def\mod{\operatorname{mod}}
\global\long\def\id{\operatorname{id}}
\global\long\def\disc{\operatorname{disc}}
\global\long\def\lindisc{\operatorname{lindisc}}
\global\long\def\tr{\operatorname{tr}}
\global\long\def\adj{\operatorname{adj}}
\global\long\def\Unif{\operatorname{Unif}}
\global\long\def\Po{\operatorname{Po}}
\global\long\def\Bin{\operatorname{Bin}}
\global\long\def\Ber{\operatorname{Ber}}
\global\long\def\Pr{\mathbb{P}}
\global\long\def\Geom{\operatorname{Geom}}
\global\long\def\Hom{\operatorname{Hom}}
\global\long\def\floor#1{\left\lfloor #1\right\rfloor }
\global\long\def\ceil#1{\left\lceil #1\right\rceil }
\global\long\def\T{\mathbf{T}}
\global\long\def\N{\mathbf{N}}
\global\long\def\B{\mathbf{B}}
\global\long\def\cdi{c}
\global\long\def\chyp{c_{k}}
\global\long\def\chypham{\chyp}
\global\long\def\chypmat{\chyp}
\global\long\def\cto{c^{\mathrm{T}}}
\global\long\def\a{\beta_k}
\global\long\def\amat{\a}
\global\long\def\aham{\a}

\begin{abstract}
We give several results showing that different discrete structures typically
gain certain spanning substructures (in particular, Hamilton cycles) after a modest random perturbation. First, we prove
that adding linearly many random edges to a dense $k$-uniform hypergraph ensures
the (asymptotically almost sure) existence of a perfect matching or a loose
Hamilton cycle. The proof involves an interesting application of Szemer\'edi's
Regularity Lemma, which might be independently useful. We next prove that digraphs with certain strong expansion
properties are pancyclic, and use this to show that adding a linear
number of random edges typically makes a dense digraph pancyclic. Finally, we
prove that perturbing a certain (minimum-degree-dependent) number of
random edges in a tournament typically ensures the existence of multiple edge-disjoint
Hamilton cycles. All our results are tight.


\end{abstract}

\section{Introduction and Results}

We say that a graph is \emph{Hamiltonian} if it has a \emph{Hamilton
cycle}: a simple cycle containing every vertex in the graph. Hamiltonicity
is a central notion in graph theory and has been extensively studied
in a wide range of contexts. In particular, due to a seminal paper
by Karp \cite{Kar72}, it has become a canonical NP-complete problem
to determine whether an arbitrary graph is Hamiltonian. There are
nevertheless a variety of easily-checkable conditions that guarantee
Hamiltonicity. The most famous of these is given by a classical theorem
of Dirac \cite{Dir52}, which states that any $n$-vertex graph ($n\ge 3$) with
minimum degree at least $n/2$ is Hamiltonian.

Dirac's theorem demands a very strong density condition, but in a
certain asymptotic sense ``almost all'' dense graphs are Hamiltonian.
If we fix $\alpha>0$ and select a graph uniformly at random among
the (labelled) graphs with $n$ vertices and $\alpha{n \choose 2}$
edges, then the degrees will probably each be about $\alpha n$. Such
a random graph is Hamiltonian with probability $1-o\left(1\right)$
(we say it is Hamiltonian \emph{asymptotically almost surely}, or
\emph{a.a.s.}). This follows from a stronger result independently due to P\'osa \cite{Pos76} and Korshunov \cite{Kor76}
that gives a \emph{threshold} for Hamiltonicity: a random $n$-vertex,
$m$-edge graph is Hamiltonian a.a.s.{} if $m\gg n\log n$, and fails
to be Hamiltonian a.a.s.{} if $m\ll n\log n$. Here and from now on,
all asymptotics are as $n\to\infty$, and we implicitly round large
quantities to integers.

In \cite{BFM03}, Bohman, Frieze and Martin studied the random graph model that
starts with a dense graph and adds $m$ random edges (this model has
since been studied in a number of other contexts; see for example
\cite{BHM04,KST06}). They found that to ensure Hamiltonicity in this
model we only need $m$ to be linear, saving a logarithmic factor
over the standard model where we start with nothing. To be precise,
\cite[Theorem~1]{BFM03} says that for every $\alpha>0$ there is
$\cdi=\cdi\left(\alpha\right)$ such that if we start with a graph with
minimum degree at least $\alpha n$ and add $\cdi n$
random edges, then the resulting graph will a.a.s.{} be Hamiltonian.
Note that some dense graphs require a linear number of extra edges
to become Hamiltonian (consider the complete bipartite graph with
partition sizes $n/3$ and $2n/3$), so the order of magnitude of this result is tight. 
We can interpret this theorem as quantifying the ``fragility''
of the few dense graphs that are not Hamiltonian, by determining the
amount of random perturbation that is necessary to make a dense graph
Hamiltonian. A comparison can be drawn to the notion of \emph{smoothed
analysis} of algorithms introduced by Spielman and Teng \cite{ST04}, which involves
studying the performance of algorithms on randomly perturbed inputs.

Our first contribution in this paper is to generalize the aforementioned
theorem to hypergraphs (and to give a corresponding result for perfect
matchings, which is nontrivial in the hypergraph setting). Unfortunately,
there is no single most natural notion of a cycle or of minimum degree
in hypergraphs. A $k$-uniform \emph{loose} cycle is a $k$-uniform
hypergraph with a cyclic ordering on its vertices such that every
edge consists of $k$ consecutive vertices and every pair of consecutive
edges intersects in exactly one vertex. The degree of a set of vertices
is the number of edges that include that set, and the \emph{minimum
$\left(k-1\right)$-degree }$\delta_{k-1}$ is the minimum degree
among sets of $k-1$ vertices. Let $\HH_{k}\left(n,m\right)$ be
the uniform distribution on $m$-edge $k$-uniform hypergraphs on
the vertex set $\range n$.
\begin{thm}
\label{thm:hypergraph-theorems}For each $\alpha>0$ there is $\chyp=\chyp\left(\alpha\right)$ such that:

\begin{enumerate}[topsep=0px,label=(\alph*)]

\item{\label{itm:hypergraph-matching-theorem}If $H$ is
a $k$-uniform hypergraph on $\range{kn}$ with $\delta_{k-1}\left(H\right)\ge\alpha n$,
and $R^{\mathrm{M}}\in\HH_{k}\left(kn,\chypmat n\right)$, then
$H\cup R$ a.a.s.{} has a perfect matching.}

\item{\label{itm:hypergraph-cycle-theorem}If $H$ is
a $k$-uniform hypergraph on $\range{\left(k-1\right)n}$with
$\delta_{k-1}\left(H\right)\ge\alpha n$, and $R\in\HH\left(\left(k-1\right)n,\chypham n\right)$,
then $H\cup R$ a.a.s.{} has a loose Hamilton cycle.}

\end{enumerate}
\end{thm}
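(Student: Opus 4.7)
I would adapt the \emph{absorption method} of Rödl, Ruciński and Szemerédi to the randomly perturbed setting. The dense hypergraph $H$ supplies (via regularity) a near-spanning matching or loose path, while the random edges in $R$ both build an absorbing gadget to clean up the remaining vertices and, for part~(b), close the resulting almost-spanning loose path into a cycle.

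First I would split $R$ as $R = R_1 \cup R_2$ with $|R_1|,|R_2| = \Theta(n)$, and use $R_1$ to build an \emph{absorber} inside $H \cup R_1$: for part~(a), a small matching $M_{\mathrm{abs}}$ with the property that $V(M_{\mathrm{abs}}) \cup L$ admits a perfect matching for every sufficiently small set $L$ with $k \mid |L|$; for part~(b), a short loose path with two distinguished endpoints possessing an analogous absorbing property. Absorber constructions usually rely on a strong minimum-degree condition in the host hypergraph; since $\alpha$ may be arbitrarily small here, I would instead exploit the linear number of uniformly random edges in $R_1$ to supply the rich local structure that absorption requires, using standard sprinkling and concentration to guarantee that every vertex has many candidate absorbing gadgets and then selecting a random subset of them.

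Write $V' = V(H) \setminus V(M_{\mathrm{abs}})$. The next step is to extract from $H|_{V'}$ a matching (resp.~loose path) covering all but $o(n)$ of its vertices. This is where I expect the \emph{nonstandard} use of Szemerédi's regularity lemma advertised in the abstract. Instead of invoking hypergraph regularity (whose density hypotheses are much stronger than $\delta_{k-1} \ge \alpha n$), I would apply the graph regularity lemma to an auxiliary object such as the bipartite graph between $(k-1)$-subsets of $V'$ and vertices of $V'$, or to link graphs of appropriately chosen $(k-2)$-sets. The minimum $(k-1)$-degree hypothesis forces positive density in this auxiliary object, so one obtains many regular pairs of positive density; these can be greedily converted into hypergraph matching edges or short loose paths, which are then concatenated to cover all but $o(n)$ vertices of $V'$.

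Finally I would close everything up. For part~(a), the absorbing property of $M_{\mathrm{abs}}$ directly converts the almost-perfect matching together with the absorber into a perfect matching of $H \cup R$. For part~(b), the remaining random edges $R_2$ are used both to splice the near-spanning loose path to the absorbing path and to close the result into a loose Hamilton cycle; the constant $\chyp$ is chosen large enough that both random steps succeed a.a.s. The main obstacle I foresee is the regularity step: translating a $(k-1)$-degree condition into usable density statements on a graph-theoretic auxiliary object, and then promoting the resulting regular partition to an almost-spanning (not merely positive-density) matching or loose path in $H$, is the technical heart of the argument and the place where the approach genuinely differs from off-the-shelf Rödl–Ruciński–Szemerédi machinery.
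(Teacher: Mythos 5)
There is a fundamental gap at the center of your proposal, and it is precisely the obstruction that the paper's tightness example is designed to illustrate. You propose to extract a near-spanning matching (or near-spanning loose path) from $H$ alone, restricting to $V'$ and invoking regularity, and then to use an absorber built from $R_1$ to handle the $o(n)$ leftover vertices. But $H$ need not contain any such near-spanning structure, and the minimum-degree hypothesis $\delta_{k-1}(H)\ge\alpha n$ cannot force one when $\alpha$ is small. Consider the ``complete bipartite hypergraph'' on parts of sizes $n$ and $2kn$ whose edges are exactly the $k$-sets meeting both parts: it satisfies $\delta_{k-1}(H)=\Omega(n)$, yet every matching in $H$ has at most $n$ edges (each edge uses a vertex of the small part), whereas a perfect matching needs $(2k+1)n/k$ edges. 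Thus the matching supplied by $H$ covers only a $k/(2k+1)$ fraction of the vertices, not a $1-o(1)$ fraction, and your absorber would have to repair $\Omega(n)$ leftover vertices. An absorber constructed from $O(n)$ random hyperedges cannot do that: $O(n)$ is below even the $\Theta(n\log n)$ threshold at which $\HH_k(kn,m)$ has a perfect matching, and absorbing structures flexible enough to swallow a linear-size leftover set would essentially require a perfect matching of their own. The same issue afflicts part (b) via the ``$2n$ out of $(2k+1)n/(k-1)$'' count.

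The paper's argument reverses the roles you assign to $H$ and $R$, and this reversal is not cosmetic but essential. It is the \emph{random} part $R$ that a.a.s.\ contains a matching (or collection of loose paths forming a partial-cycle) covering a $1-\varepsilon$ fraction of the vertices -- this holds well below the $n\log n$ threshold because one only needs an almost-perfect, not perfect, structure (Lemma~4 in the paper). The dense hypergraph $H$ then plays the ``completion'' role. This completion is encoded as a perfect-matching problem in an auxiliary bipartite graph $G_{A,B}(\cdot)$ between an $n$-vertex set $A$ and a collection $B$ of disjoint $(k-1)$-tuples, and the regularity lemma is applied to that bipartite graph. Your intuition that the nonstandard regularity step lives on a bipartite auxiliary object built from $(k-1)$-tuples is in the right direction, but you apply it to the wrong half of the decomposition and at the wrong stage. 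The key subtlety the paper's Lemma~5 handles -- and which a naive Hall-plus-union-bound argument misses -- is that when the auxiliary bipartite graph $G_{A,B}(H)$ is poorly distributed, the random near-perfect matching concentrates inside fewer clusters and the dense clusters of $G'$ compensate, while when $G_{A,B}(H)$ is well distributed it already expands on its own. That trade-off, governed by the cluster graph of a regular partition of the auxiliary bipartite graph, is what replaces absorption here. To make your outline work you would need either a genuinely different way for $H$ to supply a near-spanning structure (which the tightness example forbids), or an absorber capable of handling $\Omega(n)$ vertices with only $O(n)$ random edges (which seems out of reach).
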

All the motivation for graphs is still relevant in the hypergraph setting. Dirac's theorem approximately generalizes to hypergraphs (see \cite{KKMO11}): for small $\varepsilon$ and large $n$, if the minimum $\left(k-1\right)$-degree of an $n$-vertex $k$-uniform hypergraph is greater than $\left(1/\left(2\left(k-1\right)\right)+\varepsilon\right)\,n$ then that hypergraph contains a loose Hamilton cycle. Just as for graphs, the threshold for both perfect matchings and loose Hamilton cycles in $k$-uniform hypergraphs is $n\log n$ random
edges (see \cite{DF11} and \cite[Corollary~2.6]{JKV08}), so ``almost
all'' dense hypergraphs have Hamilton cycles and perfect matchings.

We will prove \ref{thm:hypergraph-theorems}, and show that it is
tight, in \ref{sec:hypergraphs}. The methods usually employed to
study Hamilton cycles and perfect matchings in random graphs are largely
ineffective in the hypergraph setting, so we need a very different
proof. In particular, we cannot easily manipulate paths for P\'osa-type
arguments, and we do not have an analogue of Hall's marriage theorem
allowing us to deduce the existence of a perfect matching from an
expansion property. Our proof involves reducing the theorem to the
existence of a perfect matching in a certain randomly perturbed dense bipartite
graph. The na\"ive approach to prove the existence of a perfect matching in this graph would be to use Hall's theorem and the union bound. Unfortunately this fails, and in fact the ``reason'' for a perfect matching in this perturbed graph seems to be quite different depending on the structure of the initial bipartite graph. The proof therefore makes use of the structural description provided by Szemer\'edi's regularity lemma in an interesting way.

Our second contribution in this paper is a theorem giving a general
expansion condition for\emph{ pancyclicity}. We say an $n$-vertex
\mbox{(di-)}graph is pancyclic if it contains cycles of all lengths
ranging from 3 to $n$.
\begin{thm}
\label{lem:pseudorandom-pancyclic}Let $D$ be a directed graph on
$n$ vertices with all in- and out- degrees at least $8k$, and suppose
for every pair of disjoint sets $A,B\subseteq V\left(D\right)$ with $\left|A\right|=\left|B\right|\ge k$,
there is an arc from $A$ to $B$. Then $D$ is pancyclic.
\end{thm}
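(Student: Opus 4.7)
The plan is to construct, for each $\ell\in[3,n]$, a cycle of length $\ell$ via a direct path-building-and-closing argument exploiting the strong expansion hypothesis.

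Begin with $\ell=3$: for any vertex $u$, since $|N^+(u)|,|N^-(u)|\geq 8k$, one can choose disjoint $k$-subsets $A\subseteq N^+(u)\setminus\{u\}$ and $B\subseteq N^-(u)\setminus(A\cup\{u\})$ (using $|N^-(u)\setminus(A\cup\{u\})|\geq 8k-k-1\geq k$). The hypothesis then provides an arc $v\to w$ with $v\in A$, $w\in B$, giving the 3-cycle $u\to v\to w\to u$.

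For general $\ell$, I would first record the following vertex-expansion consequence of the hypothesis: for every $S\subseteq V(D)$ with $|S|\geq k$ we have $|V(D)\setminus(S\cup N^+(S))|<k$ (otherwise disjoint $k$-subsets of $S$ and of $V(D)\setminus(S\cup N^+(S))$ would admit no arc between them, contradicting the hypothesis), and symmetrically for $N^-$. In particular, from any vertex one reaches all but fewer than $k$ other vertices in at most two steps. Then, fixing $u$ and $\ell$, I would seek a simple directed path $u=v_0\to v_1\to\cdots\to v_{\ell-1}$ with $v_{\ell-1}\to u$ by running a P\'osa-style rotation/extension on the family of length-$(\ell-1)$ simple paths starting at $u$: using the vertex-expansion lemma, show the set $E_{\ell-1}$ of achievable endpoints grows after a bounded number of rotation steps to size $\geq k$; then apply the hypothesis to $E_{\ell-1}$ and a $k$-subset of $N^-(u)\setminus E_{\ell-1}$ (which has size $\geq 8k-k\geq k$) to produce the closing arc. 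For $\ell$ very close to $n$ (in particular $\ell=n$), the argument must be modified to handle the case where $E_{\ell-1}$ cannot grow past $n-\ell$; here one proves Hamiltonicity by a separate end-case in which the remaining vertices must be absorbed into the cycle using in-expansion.

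The main obstacle is the directed rotation step. An undirected P\'osa rotation uses an edge $v_tv_i$ to rearrange $v_0\cdots v_t$ as $v_0\cdots v_iv_tv_{t-1}\cdots v_{i+1}$, but the analogous directed move requires a reversed segment $v_t\to v_{t-1}\to\cdots\to v_{i+1}$ whose existence the hypothesis does not directly supply. Overcoming this --- by invoking the in-neighborhood expansion in tandem with the out-neighborhood expansion, or by working on a suitable bipartite auxiliary graph in which left-to-right and right-to-left arcs play symmetric roles --- is the core technical step of the proof, and the reason why the degree bound is a constant multiple of $k$ rather than just $\geq k$.
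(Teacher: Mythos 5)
Your $\ell=3$ case and your one-step expansion observation (that $|V(D)\setminus(S\cup N^+(S))|<k$ whenever $|S|\ge k$) are both fine, but the heart of the argument is missing. You frame the proof as a P\'osa-style rotation/extension to show that the set $E_{\ell-1}$ of endpoints of length-$(\ell-1)$ paths from a fixed $u$ grows to size $\ge k$, and then explicitly acknowledge that the directed rotation step --- the single move that makes P\'osa's method work --- is not available in a digraph, and that resolving this is ``the core technical step.'' Two candidate fixes are gestured at (coupling in- and out-expansion, or passing to a bipartite auxiliary graph), but neither is carried out, so the key step of the proof is simply absent. Even granted the rotation machinery, the closing move is off: an arc from some endpoint $v\in E_{\ell-1}$ to some $w\in N^-(u)$ not on the path, followed by $w\to u$, closes a cycle of length $\ell+1$, not $\ell$; to get length $\ell$ you need $E_{\ell-1}\cap N^-(u)\neq\varnothing$, which the set-arc hypothesis applied to $E_{\ell-1}$ and a subset of $N^-(u)\setminus E_{\ell-1}$ does not give you when $|E_{\ell-1}|$ is only $\ge k$. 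And the Hamiltonian cycle, which you defer to a ``separate end-case,'' is really the main event --- as the paper shows, everything else reduces to it cheaply.

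Indeed, the paper avoids directed rotation entirely. It first proves a Hamiltonicity lemma under the weaker bound $\delta^{\pm}\ge 4k$: take a longest path $P=u,\dots,w$, note $N^-(u),N^+(w)\subseteq V(P)$ by maximality, choose $O(k)$-sized blocks of these neighbourhoods positioned near the two ends of $P$, and apply the set-arc hypothesis to explicitly rewire $P$ (with a short case analysis) into a cycle on all of $V(P)$; strong connectivity, itself a consequence of the hypothesis, then forces that cycle to be Hamiltonian. Pancyclicity is then a clean deletion argument: fix $v$, pick disjoint $k$-sets $U^+\subseteq N^+(v)$ and $U^-\subseteq N^-(v)$, delete $v$, $U^+$, $U^-$, and the fewer than $2k$ vertices with no incoming arc from $U^+$ or no outgoing arc into $U^-$; fewer than $4k$ vertices were removed, so the remainder still satisfies the $\delta^{\pm}\ge 4k$ lemma and hence has a Hamilton cycle, which contains paths of every length, each of which closes into a cycle through $v$ via one arc out of $U^+$ and one into $U^-$. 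For $\ell>n-4k$, delete a few arbitrary vertices and re-apply the lemma. This also explains the constant $8k$ --- it leaves degree at least $4k$ after fewer than $4k$ deletions --- which your sketch leaves unmotivated.
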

We hope this theorem could be of independent interest, but our particular
motivation is that it implies a number of results about randomly perturbed
graphs and digraphs. In particular it provides very simple proofs
of the theorems in \cite{BFM03} concerning Hamiltonicity in randomly
perturbed graphs and digraphs, and allows us to extend these theorems
to pancyclicity. Most generally, \ref{lem:pseudorandom-pancyclic} implies the following theorem. Let $\DD\left(n,m\right)$ be the uniform
distribution on $m$-arc digraphs on the vertex set $\range n$ (in this paper we allow 2-cycles in digraphs, so there are $2{n\choose2}$ possible arcs).

\begin{thm}
\label{thm:smoothed-pancyclic}For each $\alpha>0$, there is $\cdi=\cdi\left(\alpha\right)$ such that
if $D$ is a digraph on $\range n$ with all in- and out- degrees at least $\alpha n$,
and $R\in\DD\left(n,\cdi n\right)$, then $D\cup R$
is a.a.s.{} pancyclic.
\end{thm}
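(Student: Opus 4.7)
The plan is to deduce this directly from \ref{lem:pseudorandom-pancyclic}. I would set $k=\floor{\alpha n/8}$, so that the hypothesis on $D$ alone guarantees that every in- and out-degree of $D\cup R$ is at least $\alpha n\ge 8k$ (adding the arcs of $R$ only increases degrees). It therefore remains to verify, for a suitable $\cdi=\cdi(\alpha)$, that $D\cup R$ a.a.s.\ satisfies the expansion hypothesis of \ref{lem:pseudorandom-pancyclic}: for every pair of disjoint sets $A,B\subseteq\range n$ with $|A|=|B|\ge k$, at least one arc goes from $A$ to $B$.

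Any pair with $|A|=|B|=s\ge k$ contains a sub-pair of size exactly $k$, so it suffices to check the case $|A|=|B|=k$. I would ignore $D$ entirely and show that $R$ alone a.a.s.\ supplies the required arc. Fix disjoint $k$-subsets $A,B\subseteq\range n$. Of the $n(n-1)$ possible arcs on $\range n$, exactly $k^2$ go from $A$ to $B$, so since $R$ is a uniformly random $\cdi n$-element subset of the arc set,
\[
\Pr\!\left[R\cap(A\times B)=\emptyset\right]=\frac{\binom{n(n-1)-k^2}{\cdi n}}{\binom{n(n-1)}{\cdi n}}\le \left(1-\frac{k^2}{n(n-1)}\right)^{\cdi n}\le \exp\!\left(-\frac{\cdi\alpha^2 n}{100}\right).
\]
A union bound over the at most $\binom{n}{k}^2\le (8e/\alpha)^{\alpha n/4}$ choices of $(A,B)$ therefore gives failure probability at most
\[
\exp\!\left(\frac{\alpha n}{4}\log\frac{8e}{\alpha}-\frac{\cdi\alpha^2 n}{100}\right),
\]
which is $o(1)$ once $\cdi=\cdi(\alpha)$ is taken to be larger than a sufficiently big constant multiple of $\alpha^{-1}\log(1/\alpha)$.

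Given that the expansion condition holds a.a.s., \ref{lem:pseudorandom-pancyclic} immediately yields that $D\cup R$ is a.a.s.\ pancyclic. I do not expect any serious obstacle here: all the substantive content of the theorem is carried by \ref{lem:pseudorandom-pancyclic}, and the remaining randomness step reduces to the standard observation that $\cdi n$ random arcs a.a.s.\ hit every pair of linear-sized disjoint vertex sets in $\range n$. The only minor technical point is the distinction between the without-replacement model $\DD(n,\cdi n)$ and an independent model, but this is handled directly by the first displayed inequality above, without any need to invoke a separate comparison lemma.
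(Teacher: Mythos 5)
Your proof is correct and follows essentially the same route as the paper: verify a.a.s.\ that the expansion hypothesis of \ref{lem:pseudorandom-pancyclic} holds for $D\cup R$ with $k=\alpha n/8$ via a first-moment/union-bound calculation, and then invoke that lemma. The only cosmetic differences are that you compute directly in the uniform $m$-arc model instead of passing to the independent-arc model, and you use the tighter count $\binom{n}{k}^2$ in the union bound where the paper crudely uses $2^{2n}$; neither affects the substance.
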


We will prove \ref{lem:pseudorandom-pancyclic,thm:smoothed-pancyclic} in  \ref{sec:digraphs}.

Our final theorem concerns randomly perturbed tournaments. The model
that starts with a fixed \mbox{(di-/hyper-)}graph and adds random
edges is not suitable for studying random perturbation in tournaments,
because we want our perturbed tournament to remain a tournament. There
are several other models of random perturbation we could consider
that do allow us to make sense of randomly perturbed tournaments,
or are more natural in certain contexts. However, the types of results
in this paper are not sensitive to the model used. We will briefly
describe a few different models here.

First, note that for most practical purposes, models that involve
the selection of $m$ random edges are equivalent to models that involve
the selection of each edge with probability $p$ independently, where
$m=pN$ and $N$ is the total number of possible edges. One perspective or the other can be more intuitive
or result in cleaner proofs; we will use both interchangeably as is
convenient without further discussion. In all the situations in this
paper, equivalence can be proved with standard conditioning and coupling
arguments (see for example \cite[Section~1.4]{JLR00}).

As suggested by Spielman and Teng in \cite[Definition~1]{ST03}, one possible alternative
model is to \emph{change} random edges, instead of adding them. So,
for our results so far, instead of taking the union of a fixed dense
object with a random sparse object, we would take the symmetric difference.
Our results still hold in this alternative model, basically because
we can break up such a random perturbation into a phase that deletes
edges (this does not destroy denseness), and a phase that adds edges.
One undesirable quirk of this model is that it is not ``monotonic'':
if we change too many edges then we ``lose our randomness'' and
end up at the complement of our original object.

A second alternative model is to start with our fixed object and ``make
it more random'' by interpolating slightly towards the corresponding
uniform distribution. For example, in the graph case we could randomly
designate a small number of pairs of vertices for ``resampling''
and then decide whether the corresponding edges should be present
uniformly and independently at random. This is mostly equivalent to
the symmetric difference model, and is the model in which we prefer
to state our theorem about randomly perturbed tournaments.

Although it is easy to construct tournaments with no Hamilton cycle, here we prove 
that every tournament becomes Hamiltonian after a small random perturbation. We also
show that randomly perturbed tournaments are not just Hamiltonian,
but have multiple edge-disjoint Hamilton cycles.
Moreover, we can give stronger results
for tournaments with large minimum in- and out- degrees. 
Recall that $\omega\left(f\right)$ represents a function that grows faster than 
$f$ (to be precise, $g=\omega\left(f\right)$ means $g/f\to\infty$).
\begin{thm}
\label{thm:tournament}Consider a tournament $T$ with $n$ vertices
and all in- and out- degrees at least $d$. Independently choose
$m=\omega\left(n/\left(d+1\right)\right)$ random edges of $T$ and orient them uniformly at random.
The resulting perturbed tournament $P$ a.a.s.{} has $q$ arc-disjoint
Hamilton cycles, for $q=O\left(1\right)$.
\end{thm}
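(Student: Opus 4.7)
The plan is to verify the hypothesis of \ref{lem:pseudorandom-pancyclic} on $q$ arc-disjoint sub-digraphs of $P$, each of which will then be Hamiltonian. First, I replace the given perturbation by the equivalent independent-flip model, in which each arc of $T$ is independently reversed with probability $p/2\sim m/n^{2}$; this agrees with the stated model up to standard coupling, and has the advantage that the flip events are fully independent. Then I randomly partition the arcs of $P$ into $q$ arc-disjoint sub-digraphs $D_{1},\dots,D_{q}$ by assigning each arc to a uniformly random part. Setting $k:=\lfloor d/(16q)\rfloor$, Chernoff bounds show that w.h.p.\ each $D_{i}$ has minimum in- and out-degree at least $d/(2q)\ge 8k$, as required by \ref{lem:pseudorandom-pancyclic}. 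Provided one can show the expansion condition---that in each $D_{i}$, every disjoint pair $A,B\subseteq V$ with $|A|,|B|\ge k$ admits an arc from $A$ to $B$---\ref{lem:pseudorandom-pancyclic} yields a Hamilton cycle $H_{i}\subseteq D_{i}$, and the $H_{i}$ are arc-disjoint by construction.

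To propagate the expansion from $P$ to each $D_{i}$, it suffices to show that every disjoint pair $A,B\subseteq V$ with $|A|,|B|\ge k$ has at least $Cq\log n$ arcs from $A$ to $B$ in $P$, for a suitable constant $C$; each $D_{i}$ then inherits at least one such arc by Chernoff and a union bound over the random partition. For pairs where many $T$-arcs already point from $A$ to $B$ this is immediate, since the perturbation flips only $O(m)$ arcs in total. The crucial case is a ``bad'' pair where $e_{T}(A,B)$ is small, so that $B$ nearly dominates $A$ in $T$; here the random flips must create the required $A\to B$ arcs. The minimum in/out-degree hypothesis on $T$ structurally constrains such bad pairs---$A$ must lie in the common in-neighbourhood of $B$, and both $|A|,|B|=\Theta(d/q)$---so each bad pair carries $\Omega(d^{2}/q^{2})$ underlying edges of $K_{n}$, each flipped independently with probability $p/2\sim m/n^{2}$.

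The main obstacle is then a sufficiently tight union bound over bad pairs to make $m=\omega(n/(d+1))$ suffice. A naive bound over $\binom{n}{k}^{2}$ arbitrary pairs is far too weak, and some structural insight is needed---for instance, parameterising by $B$ and bounding $|\bigcap_{v\in B}N^{-}_{T}(v)|$ via the min in-degree, or a Janson-type argument accounting for overlaps between bad-pair events. I expect this accounting (and possibly a case split by the size regime of $d$ relative to $n$, since the partition strategy may become delicate when $d$ is sub-polynomial in $n$) to be the technical core of the proof.
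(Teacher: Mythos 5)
Your approach---partition the arcs of $P$ into $q$ parts and verify the pseudorandom expansion hypothesis of \ref{lem:pseudorandom-pancyclic} in each part---breaks down at the very step you identify as ``the technical core,'' and I don't think it can be rescued. The problem is that the expansion condition is far too strong to be produced by a perturbation of only $m=\omega(n/(d+1))$ edges. Take $T$ transitive (so $d=0$, $m=\omega(n)$) and pick any integer $k\ge1$. There are on the order of $n^{2k}$ disjoint pairs $(A,B)$ with $|A|=|B|=k$ in which every arc points from $B$ to $A$ (choose $2k$ vertices and split them along the transitive order). For such a pair, the probability that the perturbation creates even one $A\to B$ arc is at most $1-(1-p)^{k^2}\approx pk^2$ with $p=\Theta(m/n^2)$, so the probability of failure is $(1-p)^{k^2}\approx e^{-pk^2}$. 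A union bound requires $pk^2\gg k\log n$, i.e.\ $k\gg (\log n)/p = \Theta(n^2\log n/m)$, which for $m=\omega(n)$ still forces $k$ to be superlinear in $n$---absurd. The same calculation, adapted to a nearly-transitive tournament with small but positive minimum degree $d$, fails similarly. The structural constraints you mention (bad pairs sit in common in-/out-neighbourhoods, $|A|,|B|=\Theta(d/q)$) do reduce the count of bad pairs, but not nearly enough: the count remains exponential in $k$ while the per-pair failure probability does not decay at a matching rate when $m/n$ is as small as $\omega(1/(d+1))$. In short, \ref{lem:pseudorandom-pancyclic} is the wrong tool here: its hypothesis is a global pseudorandomness condition that a transitive tournament perturbed by $\omega(n)$ edges simply does not satisfy.

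The paper avoids this entirely by never proving expansion. Instead it shows that $P$ is a.a.s.\ $t$-strongly connected for any fixed $t$, which is a far weaker, local connectivity property, and then appeals to the theorem of K\"uhn--Lapinskas--Osthus--Patel (improved by Pokrovskiy) that a $t$-strongly connected \emph{tournament} has $\Omega(\sqrt{t})$ arc-disjoint Hamilton cycles. Concretely, the paper selects $t$ ``hub'' vertices $S$ with both $T$-degrees at least $n/6$ (possible by a double-counting argument), shows via \ref{lem:preserve-degree} that the perturbation does not destroy degrees much, and via \ref{lem:path-from-home} that each other vertex has, with high probability, $3t$ internally disjoint paths of length $\le3$ to and from each hub; a small second-moment/exceptional-set argument handles the few low-degree vertices. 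Deleting $t-1$ vertices then still leaves a reachable hub, giving $t$-strong connectivity. This analysis is local and needs only that each vertex gets $\omega(1)$ favorable flips in expectation, which is exactly what $m=\omega(n/(d+1))$ supplies. There is also a secondary issue in your writeup even when $d$ is large: the Chernoff step ``each $D_i$ has minimum degree $\ge d/(2q)$'' needs a union bound over $\Theta(n)$ vertices, which requires $d/q=\Omega(\log n)$, so your partition step would only work for $d$ at least logarithmic; the theorem must hold for all $d\ge0$.
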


Note that we allow for the case where the minimum degree $d$ is zero, and where $d$ is an arbitrary function of $n$. We will prove \ref{thm:tournament}, and show that it is tight, in \ref{sec:tournaments}.

\section{\label{sec:hypergraphs}Perfect Matchings and Hamilton Cycles in
Hypergraphs}

We first make some observations about our minimum degree requirement.
The \emph{minimum $q$-degree} $\delta_{q}\left(H\right)$ of a hypergraph $H$
is the minimum degree among all sets of $q$ vertices. Note that this
generalizes the two notions of denseness for graphs: in some contexts,
we say graphs are dense if they have many edges,
whereas in this paper we need a stronger notion of graph denseness
based on minimum degree. For a $k$-uniform hypergraph
$H$, a double-counting argument shows that if $q\le p$ then
\[
\delta_{q}\left(H\right)\ge\delta_{p}\left(H\right){n-q \choose p-q}\left.\vphantom{\sum}\right/{k-q \choose p-q}.
\]
So, imposing that a $k$-uniform hypergraph has large $\left(k-1\right)$-degree
ensures that it has large $q$-degrees for all $q$. In particular,
our requirement $\delta_{k-1}\left(H\right)=\Omega\left(n\right)$
actually implies $\delta_{q}\left(H\right)=\Omega\left(n^{k-q}\right)$
for all $q$.

Next, note that \ref{thm:hypergraph-theorems} is tight for essentially
the same reason as its corresponding theorem for graphs. Consider the dense ``complete
bipartite hypergraph'' which has two parts of sizes $n$ and
$2kn$, and has all possible $k$-edges that contain at least one
vertex from each part. Only $2n$ of these edges can contribute
to a loose Hamilton cycle, so a linear number must be added to complete
the necessary $\left(2k+1\right)n/\left(k-1\right)$ edges. Similarly, this graph contains only $n$ out of the $\left(2k+1\right)n/k$ required edges in a perfect matching.

Now we proceed to the proof of \ref{thm:hypergraph-theorems}, which
will follow from a sequence of lemmas. We will assume $k\ge3$, since the case $k=2$ is proved in \cite{BFM03}. The first step is to show that
$R$ almost gives the structure of interest on its own. Let a \emph{partial-cycle} be a hypergraph which can be extended to a loose Hamilton
cycle by adding edges. Recall that $\HH_{k}\left(n,m\right)$ is
the uniform distribution on $m$-edge $k$-uniform hypergraphs on
the vertex set $\range n$.

\global\long\def\b{\chyp}
\global\long\def\bmat{\b}
\global\long\def\bham{\b}
\global\long\def\patheps{\gamma}

\begin{lem}
\label{lem:hypergraph-almost-object}For any $\varepsilon>0$, there is $\b=\b\left(\varepsilon\right)$ such that:

\begin{enumerate}[topsep=0px,label=(\alph*)]

\item{\label{itm:hypergraph-large-matching}$R^{\mathrm{M}}\in\HH_{k}\left(kn,\bmat n\right)$
a.a.s.{} has a matching of $\left(1-\varepsilon\right)n$ edges.}

\item{\label{itm:hypergraph-large-subcycle}$R^{\mathrm{H}}\in\HH_{k}\left(\left(k-1\right)n,\bham n\right)$
a.a.s.{} has a partial-cycle with $\left(1-\varepsilon\right)n$ edges.}

\end{enumerate}\end{lem}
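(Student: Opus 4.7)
The plan is a random greedy argument with a differential-equation heuristic and Azuma-type concentration.

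For part~(a), I would expose the edges of $R^{\mathrm{M}}$ in the sampled order and maintain a matching $M$, adding each new edge to $M$ iff it is vertex-disjoint from all edges currently in $M$. Writing $s_t=|M|$ after $t$ exposed edges and conditioning on $s_{t-1}=s$, the conditional probability that the next (essentially uniform) random $k$-subset of $[kn]$ misses the $ks$ covered vertices is $\binom{kn-ks}{k}/\binom{kn}{k}=(1-s/n)^{k}+O(1/n)$. Comparing to the solution of $ds/dt=(1-s/n)^{k}$ gives $s_t/n\approx 1-(1+(k-1)t/n)^{-1/(k-1)}$, so $\E[s_{\b n}]\ge (1-\varepsilon/2)n$ once $\b \ge (\varepsilon^{-(k-1)}-1)/(k-1)$. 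Since replacing any one of the $\b n$ sampled edges changes $s$ by $O(1)$, Azuma--Hoeffding on the edge-exposure Doob martingale delivers the required a.a.s.\ concentration.

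For part~(b), I would run an analogous greedy but maintain a vertex-disjoint collection $\mathcal{P}$ of loose paths. Accept the next edge $e$ if either (i)~$e$ is disjoint from all vertices covered by $\mathcal{P}$ (start a new length-$1$ path), or (ii)~$e$ meets $\mathcal{P}$ in exactly one vertex, which is one of the $2|\mathcal{P}|$ open path-endpoints, and the remaining $k-1$ vertices of $e$ are uncovered (extend that path). Tracking the pair $(m_t,p_t)$ of accepted edges and paths, and hence the uncovered-vertex count $(k-1)n-(k-1)m_t-p_t$, yields a two-variable ODE system whose analysis (in the same spirit as~(a)) shows that for $\b=\b(\varepsilon)$ large one has $m_{\b n}\ge (1-\varepsilon)n$ while $p_{\b n}\le (k-1)\varepsilon n$. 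Since a loose-path system on $(k-1)n$ vertices with $m$ edges and $p$ components occupies exactly $(k-1)m+p$ vertices, this certifies that the output is a partial-cycle with at least $(1-\varepsilon)n$ edges, and Azuma--Hoeffding again handles concentration coordinatewise.

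\textbf{Main obstacle.} The central technical point is part~(b): one must ensure the component count $p_t$ does not grow linearly with $m_t$, lest the partial-cycle vertex budget be exceeded as $m_t\to n$. The self-regulating mechanism is that as $p_t$ grows, the acceptance rate of ``extend'' moves (proportional to $p_t$) overtakes that of ``start'' moves, naturally capping $p_t$ near the required bound. A possibly cleaner alternative is to split $R^{\mathrm{H}}$ into two independent portions, apply part~(a) to the first to extract a large loose matching, then use the second to merge pairs of matching edges via bridge edges into longer loose paths, directly reducing the component count to the level required by the vertex budget.
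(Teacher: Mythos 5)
Your differential-equations/greedy approach is a genuinely different route from the paper, which instead takes a \emph{maximal} collection of loose paths of a fixed length $\ell=1/\varepsilon$ and shows (via a structural lemma: a.a.s.\ every $\gamma n$-set of vertices in the random hypergraph spans a length-$\ell$ loose path, proved by cleaning out high-pair-degree vertices and a longest-path argument) that few vertices are left over; this avoids all process-tracking and concentration machinery. Your plan is plausible in spirit, but I see two concrete gaps.

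\textbf{The concentration step is wrong as stated.} For $k\geq3$ the greedy matching size is \emph{not} an $O(1)$-Lipschitz function of the edge sequence, so the bounded-differences form of Azuma--Hoeffding does not apply. One can build an explicit adversarial sequence where two runs differing only in one edge diverge linearly: once the vertex-sets $V(M^A)\setminus V(M^B)$ and $V(M^B)\setminus V(M^A)$ become nonempty, each subsequent edge hitting exactly one of them and otherwise fresh is accepted by one run and rejected by the other, and each such acceptance creates $k-1$ new asymmetric vertices; the asymmetric sets can thus be pumped up geometrically, so $\bigl||M_T^A|-|M_T^B|\bigr|$ can be $\Theta(T)$. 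The conclusion (concentration of the greedy output) is still true, but you need to invoke Wormald's differential-equation method, or Warnke's typical bounded differences, or a direct variance calculation --- not worst-case Azuma on the edge-exposure filtration.

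\textbf{The vertex budget in part (b) does not close.} A vertex-disjoint union of $p$ loose paths with $m$ edges on $(k-1)n$ vertices is a partial-cycle exactly when $m+p\leq n$ (each of the $p$ gaps needs at least one extra edge, and the total edge count must be $n$). Your claimed bounds $m\geq(1-\varepsilon)n$ and $p\leq(k-1)\varepsilon n$ are compatible with $m+p=n+(k-2)\varepsilon n > n$ for $k\geq3$, i.e.\ the output need not be a partial-cycle at all. Worse, the bound $p\leq(k-1)\varepsilon n$ is simply the trivial vertex-count bound $(k-1)m+p\leq(k-1)n$ rearranged, so it carries no information, and your greedy is actually \emph{driven} toward the equality $(k-1)m+p\approx(k-1)n$ (it runs until the uncovered set is exhausted), which for any $m<n$ forces $m+p>n$. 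To fix this you would need to either cap the number of \emph{start}-moves once $p$ reaches $\varepsilon n$ and show that pure \emph{extend}-moves still push $m$ up to $(1-\varepsilon)n$ before stalling, or flesh out your alternative (matching plus merges). Neither is immediate: in the merge approach, a single bridge move takes $p\to p-1$ and $m\to m+1$, so $m+p$ is \emph{invariant} under merging; you really must grow the paths to length roughly $1/\varepsilon$, which means chaining $\Theta(1/\varepsilon)$ matching edges per path and arguing you can find enough disjoint bridges --- essentially reconstructing the paper's fixed-length-path argument by other means.

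So: right flavour of proof, but both the concentration claim and the partial-cycle arithmetic need repair before this goes through.
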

\begin{proof}
First we define a \emph{loose path} by analogy with loose cycles:
a $k$-uniform loose path is a hypergraph with a vertex ordering such
that every edge consists of $k$ consecutive vertices and every pair
of consecutive edges intersects in exactly one vertex. We say that the edges at the start and at the end of the ordering are \emph{extremal edges}. A matching
is a collection of loose paths of length 1, and a partial-cycle is any
collection of loose paths with enough vertices left over to link them
together into a cycle.

We will first prove the following:

\textbf{Claim:} for any $\ell\ge1$ and $\patheps>0$, there
is $h=h\left(\ell,\patheps\right)$ such that the following holds. In a
random hypergraph $R\in\HH_{k}\left(kn,hn\right)$,
a.a.s.{} every set of $\patheps n$ vertices contains a loose path of length
$\ell$.

Parts \oldref{itm:hypergraph-large-matching} and \oldref{itm:hypergraph-large-subcycle} of the Lemma will follow from this claim.

In accordance with the discussion in the introduction, it is equivalent to consider
the model for $R$ where each edge is independently present with probability
$p=hn/{kn \choose k}=O\left(n^{1-k}\right)$.

The probability a particular pair of edges is present in $R$ is $p^2$. There are $O\left(n^2\right)$ pairs of vertices and for each there are $O\left({n-2\choose k-2}^2\right)=
O\left(n^{2\left(k-2\right)}\right)$ pairs of edges containing both those vertices. So, the expected number of pairs of vertices
which are contained in more than one edge (have degree more than one)
is $O\left(n^2n^{2\left(k-2\right)}p^2\right)=O\left(1\right)$. By Markov's inequality, there are a.a.s.{} fewer
than $\patheps n/2$ such pairs of vertices. So if we delete a
set $D$ containing one member of each of those pairs, then every pair of vertices in the remaining hypergraph has
degree at most one.

Let $d=k\ell$. For
large $h$, using the Chernoff bound together with the union
bound, it is easy to show that a.a.s.{} every set of $\patheps n/2$ vertices spans at least $d\patheps n/2$
edges, which is to say that the average 1-degree in the induced subhypergraph
is at least $kd$. We assume this holds for the remainder of the proof.

Every set $S$ of $\patheps n$ vertices includes a set of $\patheps n/2$
vertices disjoint from $D$, which has average 1-degree at least $kd$.
Deleting a vertex of degree less than $d$ increases the average degree
of the induced subhypergraph, so $S\backslash D$ includes a set of
vertices spanning a subhypergraph $Q$ of $R$ with minimum degree
at least $d$. Let $P$ be a longest loose path in $Q$ and let $v$
be a vertex with degree one in $P$, in one of the extremal edges of $P$. Since $P$ cannot
be extended to a longer path, each of the (at least $d$) edges containing
$v$ also contains another vertex $u$ of $P$. But because $Q$ contains
no vertices from $D$, there is at most
one edge containing both $v$ and $u$, so $P$ must have at least
$d$ vertices and therefore has length at least $\ell$. This proves
our claim.

We now prove \ref{itm:hypergraph-large-matching}. Consider a matching of maximum size in $R^{\mathrm{M}}$.
There can be no edge consisting of unmatched vertices because this would
allow us to extend the matching, contradicting maximality. Applying
our claim with $\patheps=k\varepsilon$ and $\ell=1$, we can see that
if $\bmat$ is large enough then
a.a.s.{} every set of $k\varepsilon n$ vertices spans at least one
edge in $R$. This proves there are fewer than $k\varepsilon n$
vertices unmatched after our maximum matching, hence our matching
has at least $\left(1-\varepsilon\right)n$ edges.

Finally we prove \ref{itm:hypergraph-large-subcycle}, using roughly the same idea. It takes $\left(k-2\right)q$ additional
vertices to link $q$ loose paths together into a loose cycle, and
a union of $q$ disjoint loose paths of length $\ell\ge1$ has $\left(\ell\left(k-1\right)+1\right)q$
vertices. So, such a union of paths is a partial-cycle in $R^{\mathrm{H}}$
precisely when 
\[
\left(k-2\right)q+\left(\ell\left(k-1\right)+1\right)q\le\left(k-1\right)n,
\]
which simplifies to the condition $q\left(\ell+1\right) \leq n$.

We will apply our claim with $\ell=1/\varepsilon$ and $\patheps=\left(k-2\right)/\left(\ell+1\right)$. Consider a maximum-size collection of disjoint length-$\ell$ loose paths in $R^{\mathrm{H}}$. Our claim proves that if $\bmat$ is large enough then there are fewer than $\left(k-2\right)n/\left(\ell+1\right)$ vertices left over after our maximal collection of loose paths. This means our maximal collection has at least
\[
\frac{\left(k-1\right)n-\left(k-2\right)n/\left(\ell+1\right)}{\ell\left(k-1\right)+1}= n/\left(\ell+1\right)=:q
\]
loose paths. Since $q\left(\ell+1\right) \leq n$, a subcollection of $q$ of these loose paths gives a partial-cycle, which has $\ell q=\left(1-1/\left(\ell+1\right)\right)n>\left(1-\varepsilon\right)n$ edges.
\end{proof}
The second step to prove \ref{thm:hypergraph-theorems} is to show that a dense
hypergraph plus a large partial structure a.a.s.{} gives the structure
we are looking for. For both theorems, we will be able to reduce this
step to the following lemma.
\begin{lem}
\label{lem:bipartite-plus-big-matching-perfect}There is $\xi=\xi\left(\alpha\right)>0$ 
such that the following holds. Let $G$ be a bipartite graph with
parts $A,B$ of equal size $n$, and suppose $\delta\left(G\right)\ge\alpha n$.
Let $\bar{M}$ be a uniformly random perfect matching between $A$
and $B$ (not necessarily contained in $G$) and let $M$ be any sub-matching of $\bar{M}$ with $\left(1-\xi\right) n$
edges. Then a.a.s.{} $G\cup M$ has a perfect matching. 
\end{lem}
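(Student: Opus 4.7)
The plan is to verify Hall's condition for $G \cup M$ directly. Let $\bar R = \bar M \setminus M$, so $|\bar R| = \xi n$. A perfect matching of $G \cup M$ fails to exist iff there is a ``bad'' pair $(S, T) \subseteq A \times B$ with $|S| + |T| > n$, $e_G(S, T) = 0$, and $e_M(S, T) = 0$; the last condition says precisely that every $\bar M$-edge from $S$ to $T$ lies in $\bar R$, i.e.\ $e_{\bar M}(S, T) \le \xi n$. The minimum-degree hypothesis further forces $\alpha n \le |S|, |T| \le (1-\alpha) n$. So the task is to show that, a.a.s.\ over $\bar M$, every $(S, T)$ with $|S| + |T| > n$ and $e_G(S, T) = 0$ satisfies $e_{\bar M}(S, T) > \xi n$.

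A naive union bound fails: a Hoeffding-type bound on a random bipartite matching only gives $\Pr[e_{\bar M}(S, T) \le \xi n] \le \exp(-c(\alpha, \xi) n)$ for a single pair, with $c$ that cannot beat the $\log 4$ factor from the $4^n$ pairs when $\alpha$ is small. To get around this, I would use Szemer\'edi's regularity lemma to reduce the enumeration to only $O(1)$ ``shapes''. Apply the bipartite regularity lemma to $G$ with $\epsilon \ll \xi$, obtaining equitable partitions $A = A_1 \cup \cdots \cup A_t$ and $B = B_1 \cup \cdots \cup B_t$ (with $t = O_{\alpha,\xi}(1)$, $m = n/t$), and a reduced graph $\tilde G$ whose edges are the $\epsilon$-regular pairs of density at least some $d \ll \alpha$; then $\tilde G$ has minimum degree at least $(\alpha - 2d)t$. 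To any candidate bad $(S, T)$ associate the cluster-level pair $\tilde S = \{i : |S \cap A_i| \ge \epsilon m\}$ and $\tilde T = \{j : |T \cap B_j| \ge \epsilon m\}$. The defining property of regular dense pairs forces $(\tilde S, \tilde T)$ to be an independent pair in $\tilde G$ (else there would be a $G$-edge in $S \times T$), and the size constraint $|S|+|T|>n$ transfers to $|\tilde S| + |\tilde T| > (1 - 2\epsilon) t$. Since there are only $4^t = O(1)$ such pairs, it suffices (by a union bound) to fix one $(\tilde S, \tilde T)$ and show a.a.s.\ that no bad $(S, T)$ compatible with it exists.

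Fix such a $(\tilde S, \tilde T)$ and put $X = \bigcup_{i \in \tilde S} A_i$, $Y = \bigcup_{j \in \tilde T} B_j$, and $\beta = (|X|+|Y|)/n - 1$. The $\tilde G$-minimum-degree gives $|\tilde S|, |\tilde T| \le (1 - \alpha + 2d)t$, hence $\beta \le 1 - 2\alpha + O(d)$. In the ``balanced'' regime where $\beta$ is small, any compatible $(S, T)$ satisfies $|X \setminus S| + |Y \setminus T| \le \beta n + O(\epsilon n)$, and since $\bar M$ is a matching one gets $e_{\bar M}(S, T) \ge e_{\bar M}(X, Y) - \beta n - O(\epsilon n)$. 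A swap-martingale concentration inequality for random matchings shows $e_{\bar M}(X, Y)$ is within $o(n)$ of its mean $|X||Y|/n$ with high probability, and a short calculation comparing $|X||Y|/n$ with $\xi n + \beta n$ closes this case.

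The main obstacle is the ``extreme'' regime, where $\beta$ is close to its maximum $1 - 2\alpha$: there the expectation bound on $e_{\bar M}(X, Y)$ no longer survives the loss of $\beta n$, and a different, more structural argument is needed. The $\tilde G$-independence of $(\tilde S, \tilde T)$ together with the $G$-minimum-degree condition forces $G[X, B \setminus Y]$ and $G[A \setminus X, Y]$ to be essentially complete bipartite, so $G$ itself already supports a matching saturating $A \setminus X$ and $B \setminus Y$. The remaining problem is to match two ``residual'' sets of size $\approx (1 - 2\alpha) n$ inside $X$ and $Y$ respectively, using edges of $M$. The $M$-edges between $X$ and $Y$ already form a matching whose size is, by concentration, at least $(1-\alpha)^2 n - \xi n > (1 - 2\alpha) n$ (provided we take $\xi < \alpha^2$), so picking any $(1-2\alpha)n$ of them and applying Hall to the (near-complete) $G$-parts yields a perfect matching of $G \cup M$. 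Coordinating the balanced and extreme regimes via this case split on $\beta$, so that the same $\xi(\alpha)$ works uniformly, is the nontrivial technical content of the proof --- this is exactly the ``nonstandard'' use of the regularity lemma alluded to in the introduction.
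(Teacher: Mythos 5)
Your plan is fundamentally the paper's plan: verify Hall's condition, invoke the bipartite regularity lemma, reduce to the $O(1)$ cluster-level configurations, and control the random matching by cluster-level concentration. The bookkeeping is different (you track bad pairs $(S,T)$ and their compressed images $(\tilde S,\tilde T)$, while the paper tracks a single $W$ and bounds $|N_{G\cup M}(W)|$ by summing a $G'$-contribution inside a fixed-size union of clusters and an $M$-contribution outside it), but these are two presentations of the same argument, not genuinely different routes.

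The issue is that you have located the ``nontrivial technical content'' in the wrong place and then left it as a sketch. The case split on $\beta$ is unnecessary: your ``balanced-regime'' computation already works for all $\beta$ once you use the constraint you yourself derived. Since $(\tilde S,\tilde T)$ is independent in $\tilde G$ and $\tilde G$ has minimum degree at least $(\alpha-2d)t$, you get $|\tilde S|,|\tilde T|\le(1-\alpha+2d)t$, hence (writing $|X|=an$, $|Y|=bn$) $a,b\le 1-\alpha+O(d)$. Your target inequality $\,|X||Y|/n^2>\beta+\xi+O(\varepsilon)\,$ is equivalent, using $a+b=1+\beta$, to $(1-a)(1-b)>\xi+O(\varepsilon)$, and $(1-a)(1-b)\ge(\alpha-O(d))^2$ follows immediately from the upper bounds on $a,b$ with no restriction on $\beta$. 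Taking $\xi,\varepsilon,d\ll\alpha^2$ closes it uniformly. So the ``extreme regime'' you single out is not a hard case at all, and the separate structural argument you sketch for it (with its unjustified ``essentially complete bipartite'' claim and the uncoordinated Hall argument for stitching $G$-matchings to $M$-matchings) is superfluous. Once you drop the case split, your proposal and the paper's proof are essentially the same argument with the same numerical conclusion $\xi\approx c\alpha^2$.
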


Note that while we require $\bar M$ to be uniformly random, we make no assumptions about the distribution of $M$ other than that is contained within $\bar M$.

The immediate na\"ive approach to prove this lemma would be to show each set of vertices expands, and then to apply the union bound and Hall's marriage theorem. However, the
probability of failure to expand is not small enough for this to work.
We can gain some insight into the problem by considering two ``extremal''
cases for $G$. First, consider the case where the edges of $G$ are
not evenly-distributed, and are ``concentrated'' in certain spots.
For example, identify sets $A'\subset A$ and $B'\subset B$ with
$\left|A'\right|,\left|B'\right|=\alpha n$, and let $G$ contain
only those edges incident to a vertex in $A'$ or $B'$. The addition
of the near-perfect matching $M$ gives a near-perfect matching between
$A\backslash A'$ and $B\backslash B'$, and we can match the unmatched
vertices from $A$ (respectively $B$) with any element of $B'$ (respectively
$A'$). That is, if our graph is not well-distributed, then the more
concentrated parts help us to augment $M$ into a perfect matching in
$G\cup M$. On the other extreme, if $G$ is a random-like, well-distributed
dense graph then we cannot augment $M$ in the same way. But this
is not necessary, because a random dense graph $G$ contains a perfect
matching on its own! In order to apply these ideas to prove the lemma
for an arbitrary graph $G$, we use the structural description of
$G$ provided by Szemer\'edi's regularity lemma.

For a disjoint pair of vertex sets $\left(X,Y\right)$ in a graph,
let its \emph{density} $d\left(X,Y\right)$ be the number of edges
between $X$ and $Y$, divided by $\left|X\right|\left|Y\right|$.
A pair of vertex sets $\left(V^{1},V^{2}\right)$ is said to be \emph{$\varepsilon$-regular}
if for any $U^{1},U^{2}$ with $U^{\ell}\subseteq V^{\ell}$ and $\left|U^{\ell}\right|\ge\varepsilon\left|V^{\ell}\right|$,
we have $\left|d\left(U^{1},U^{2}\right)-d\left(V^{1},V^{2}\right)\right|\le\varepsilon$.

\global\long\def\s{s}
\global\long\def\k{r}
\global\long\def\K{K}

We will use a bipartite version of the regularity lemma (which can
be deduced from say \cite[Theorem~2.3]{Tao05} in a similar way to
\cite[Theorem~1.10]{KS96}). Let $\alpha'=\alpha/2$ and let $\varepsilon>0$
be a small constant depending on $\alpha$ that will be determined
later (assume for now that $\varepsilon<\alpha/8$). There is a large
constant $\K$ depending only on $\alpha$ such that there exist partitions
$A=V_{0}^{1}\cup\dots\cup V_{\k}^{1}$ and $B=V_{0}^{2}\cup\dots\cup V_{\k}^{2}$
with $\k\le \K$, in such a way that the following conditions are satisfied.
The ``exceptional'' clusters $V_{0}^{1}$ and $V_{0}^{2}$ both
have fewer than $\varepsilon n$ vertices, and the non-exceptional
clusters in $A$ and $B$ have equal size: $\left|V_{i}^{\ell}\right|=\s n$. (Note that this implies $1-\varepsilon\le rs\le1$).
There is a subgraph $G'\subseteq G$ with minimum degree at least
$\left(\alpha'+\varepsilon\right)n$ such that each pair of distinct
clusters $V_{i}^{1},V_{j}^{2}$ ($i,j\ge1$) is $\varepsilon$-regular
in $G'$ with density zero or at least $2\varepsilon$.

Define the cluster graph $\Gamma$ as the bipartite graph whose vertices
are the non-exceptional clusters $V_{i}^{\ell}$, and whose edges
are the pairs of clusters between which there is nonzero density in
$G'$. The fact that $G'$ is dense implies that $\Gamma$ is dense as well, as follows. In $G'$ each $V_{i}^{\ell}$ has at least $\left(\alpha'+\varepsilon\right)n\left|V_{i}^{\ell}\right|$
edges to other clusters. There are at most $\left(\varepsilon n\right)\left(\s n\right)$
edges to the exceptional cluster $V_{0}$ and at most $\left(\s n\right)^{2}$
edges to each other cluster. So, $d_{\Gamma}\left(V_{i}^{\ell}\right)\ge\left(\left(\alpha'+\varepsilon\right)n-\varepsilon n\right)\s n/\left(\s n\right)^{2}\ge\alpha'\k$
and $\Gamma$ has minimum degree at least $\alpha'\k$.
\begin{proof}[Proof of \ref{lem:bipartite-plus-big-matching-perfect}]
We use Hall's marriage theorem: we need to show that a.a.s.{} $\left|N_{G\cup M}\left(W\right)\right|\ge\left|W\right|$
for all $W\subseteq A$. If $\left|W\right|\le\alpha n$, then $\left|N_{G\cup M}\left(W\right)\right|\ge\left|N_{G}\left(W\right)\right|\ge\alpha n\ge\left|W\right|$
by the degree condition on $G$. Similarly, if $\left|W\right|\ge\left(1-\alpha\right)n$
then every $b\in B$ has an edge to $W$ in $G$, so $\left|N_{G\cup M}\left(W\right)\right|=\left|B\right|\ge\left|W\right|$.
The difficult case is where $\alpha n\le\left|W\right|\le\left(1-\alpha\right)n$.

Apply the bipartite version of the regularity lemma above (with $\varepsilon$ sufficiently small compared to $\alpha$), to obtain
a subgraph $G'$ and a dense cluster graph $\Gamma$. Consider any $W\subset A$ with $\alpha n\le\left|W\right|\le\left(1-\alpha\right)n$.
For each $i$ let $\pi_{i}=\left|V_{i}^{1}\cap W\right|/\left(\s n\right)$,
and let $X$ be the set of clusters $V_{i}^{1}$ with $\pi_{i}\ge\varepsilon$. Let $A_X$ denote the set of all vertices in those clusters.
Note that $\left|W\backslash A_X\right|\le r\varepsilon sn\le\varepsilon n$. Also,
if $\varepsilon$ is small compared to $\alpha$, then $X$ must be
nonempty. Now, if $V_{j}^{2}\in N_{\Gamma}\left(X\right)$ then by $\varepsilon$-regularity
there are edges in $G'$ from $W$ to at least $\left(1-\varepsilon\right)\s n$
vertices of $V_{j}^{2}$. Let $Y=N_{\Gamma}\left(X\right)$ and let $B_Y$ be the set of vertices in the clusters in $Y$; it follows that $\left|B_Y\backslash N_{G'}\left(W\right)\right|\le\varepsilon\left|Y\right|sn\le\varepsilon n$.

If $\left|Y\right|=\k$ (as would occur if $G$
was well-distributed) then
\[
\left|N_{G\cup M}\left(W\right)\right|\ge\left|N_{G'}\left(W\right)\right|\ge \left|B_Y\right|-\varepsilon n\ge n-2\varepsilon n\ge\left|W\right|
\]
for $2\varepsilon\le \alpha$, and we are done.

Otherwise, there is $V_{j}^{2}$ outside $Y=N_{\Gamma}\left(D\right)$. This
$V_{j}^{2}$ must have $\alpha'\k$ neighbours outside $X$ in $\Gamma$,
so $\left|X\right|\le\left(1-\alpha'\right)\k$. Now, $Y\ge\alpha'r$ so $B_Y$ is a fixed set of at least $\alpha'rsn\ge \left(1-\varepsilon\right)\alpha'n$
vertices in $B$. Also, $N_{\bar{M}}\left(A_X\right)$ is a uniformly
random set of $\left|A_X\right|\le\left(1-\alpha'\right)rsn\le\left(1-\alpha'\right)n$
vertices in $B$, which means $B\backslash N_{\bar{M}}\left(A_X\right)$
is a uniformly random set of at least $\alpha'n$ vertices. So, $\left|B_Y\backslash N_{\bar{M}}\left(A_X\right)\right|$
is hypergeometrically distributed with mean at least $\left(1-\varepsilon\right)\left(\alpha'\right)^{2}n$.
By a concentration inequality (see for example \cite[Theorem~2.10]{JLR00}),
a.a.s.
\[
\left|B_Y\backslash N_{\bar{M}}\left(A_X\right)\right|\ge\left(\alpha'\right)^{2}n-2\varepsilon n.
\]
There are fewer than $2^r=O(1)$ possibilities for $X$, so by the union bound this inequality holds a.a.s.{} for the $X$ arising from \emph{any} choice of $W$. Now, note that $\left|N_{\bar{M}}\left(W\right)\backslash N_{\bar{M}}\left(A_X\right)\right|=\left|W\backslash A_X\right|$ and recall that $\left|W\backslash A_X\right|,\,\left|B_Y\backslash N_{G'}\left(W\right)\right|\le\varepsilon n$, so
\[
\left|N_{G'}\left(W\right)\backslash N_{\bar{M}}\left(W\right)\right|
  \ge\left|B_Y\backslash N_{\bar{M}}\left(A_X\right)\right|-2\varepsilon n\ge\left(\alpha'\right)^{2}n-4\varepsilon n.
\]
Also, $\left|N_{M}\left(W\right)\right|\le \left|N_{\bar M}\left(W\right)\right|-\xi n=\left|W\right|-\xi n$. We conclude that
\[
\left|N_{G\cup M}\left(W\right)\right| \ge\left|N_{M}\left(W\right)\right|+\left|N_{G'}\left(W\right)\backslash N_{\bar{M}}\left(W\right)\right|
  \ge\left|W\right|-\xi n+\left(\alpha'\right)^{2}n-4\varepsilon n.\\
\]
For small $\varepsilon$ and $\xi$ (say $4\varepsilon\le\xi=\left(\alpha'\right)^{2}/2=\alpha^2/8$), this
gives $\left|N_{G\cup M}\left(W\right)\right|\ge \left|W\right|$.
\end{proof}
\begin{rem*}
Note that $\xi$ does not depend on any of the constants arising in the regularity lemma. These constants only influence the value of $n$ needed to make the probability implicit in ``a.a.s.'' close to 1.
\end{rem*}
Now we describe the reduction of \ref{thm:hypergraph-theorems} to
\ref{lem:bipartite-plus-big-matching-perfect}. Consider a $k$-uniform
hypergraph $L$. Suppose $A$ is a set of $n$ vertices and $B$ is
a $\left(k-1\right)$-uniform hypergraph on the remaining vertices.
Then we define a bipartite graph $G_{A,B}\left(L\right)$ as follows.
The vertices of $G_{A,B}\left(L\right)$ are the vertices in $A$,
as well as the edges in $B$ (we abuse notation and identify the hypergraph
$B$ with its edge set). We put an edge between $a\in A$ and $\left\{ b_{1},\dots,b_{k-1}\right\} \in B$
if $\left\{ a,b_{1},\dots,b_{k-1}\right\} $ is an edge in $L$. Basically,
the idea is that if $L$ has a large matching or partial-cycle, then there
is $A$ and $B$ such that $G_{A,B}\left(L\right)$ has a large matching. Conversely,
if $G_{A,B}\left(L\right)$ has a large matching for any $A$
and $B$, then the edges of that matching correspond to a large matching or partial-cycle in $L$. \ref{lem:hypergraph-almost-object}
will provide a large matching or partial-cycle in the random hypergraph $R$, so there are $A$
and $B$ such that $G_{A,B}\left(R\right)$ has a large matching (this matching is itself random).
By \ref{lem:bipartite-plus-big-matching-perfect}, the addition of
this matching to $G_{A,B}\left(H\right)$ will give a perfect
matching in $G_{A,B}\left(H\right)\cup G_{A,B}\left(R\right)=G_{A,B}\left(H\cup R\right)$,
corresponding to a perfect matching or loose Hamilton cycle in $H\cup R$.

We make this precise as follows. For any $\varepsilon>0$ depending on $\alpha$, if $\chyp\left(\alpha\right)$
is large enough, \ref{lem:hypergraph-almost-object} ensures the a.a.s.{}
existence of a $\left(1-\varepsilon\right)n$-edge matching or partial-cycle $Q$ in $R$. Extend this to a perfect matching or loose Hamilton cycle
$\bar{Q}$ on $V\left(R\right)$ in an arbitrary way. Note that the distribution of $R$
is invariant under relabelling of its vertices so we can relabel its
vertices uniformly at random to assume that $\bar{Q}$ is a uniformly
random perfect matching or loose Hamilton cycle.

We describe an alternative way to realize a uniformly random perfect
matching or loose Hamilton cycle. In the perfect matching case, choose a
uniformly random ordering of $V\left(R\right)$: 
\[
a^{1},\dots,a^{n},\, b_{1}^{1},\dots,b_{k-1}^{1},\, b_{1}^{2},\dots,b_{k-1}^{n}.
\]
Let $b^{i}=\left\{ b_{1}^{i},\dots b_{k-1}^{i}\right\}$, and let our perfect matching have edges of the form $\left\{a^{i}\right\}\cup b^i $.
This gives a uniformly random perfect matching, so we can couple our
random ordering with $R$ in such a way that the perfect matching
defined by the ordering coincides with $\bar{Q}$. Then, let $A$
consist of the vertices $a^{i}$ and let $B$ be the hypergraph with
edges $b^i$.
Note that $A$ is a uniformly random $n$-vertex set with a uniformly random ordering $a^{1},\dots,a^{n}$, and $B$ is a
uniformly random $\left(k-1\right)$-uniform perfect matching on the remaining vertices. Also,
if we condition on $A$ and $B$ then $G_{A,B}\left(\bar{Q}\right)$
is a uniformly random perfect matching, and $G_{A,B}\left(Q\right)$
is a sub-matching with $\left(1-\varepsilon\right)n$ of its edges.

The Hamilton cycle case is similar. Again we want to define a uniformly random loose Hamilton cycle via a random ordering of $V\left(R\right)$. So, choose a uniformly random ordering:
\[
a^{1},\dots,a^{n},b_{0},\dots,b_{\left(k-2\right)n-1}.
\]
For each $i$ let $b^{i}$ be the $\left(k-1\right)$-vertex set $\left\{ b_{\left(k-2\right)i},b_{\left(k-2\right)i+1},\dots,b_{\left(k-2\right)\left(i+1\right)}\right\}$ (where the subscripts are interpreted modulo $\left(k-2\right)n$). Note that in this case consecutive $b^i$ intersect each other in one vertex. We define our loose Hamilton cycle to have edges of the form $\left\{ a^{i}\right\} \cup b^{i}$. This loose Hamilton cycle is uniformly random, so we can couple our random ordering with $R$ in such a way that our loose Hamilton cycle coincides with $\bar Q$. Let $A$ contain
the vertices $a^{i}$ and let $B$ contain the
edges $b^{i}$. Exactly the same considerations hold: $A$ is a uniformly
random $n$-vertex set with a uniformly random ordering $a^{1},\dots,a^{n}$, $B$ is a uniformly random $\left(k-1\right)$-uniform loose Hamilton
cycle on the remaining vertices, and $G_{A,B}\left(Q\right)$ is a
$\left(1-\varepsilon\right)n$-edge sub-matching of the uniformly random perfect
matching $G_{A,B}\left(\bar{Q}\right)$.

We give one final lemma, establishing that $G_{A,B}\left(H\right)$ is a.a.s.{} dense if $H$ is.
\begin{lem}
\label{lem:hypergraph-bipartite-degree-transfer}There is $\a=\a\left(\alpha\right)>0$ such that the following
holds.

\begin{enumerate}[topsep=0px,label=(\alph*)]

\item{\label{itm:hypergraph-bipartite-matching-degree-transfer}Let $H$ satisfy the conditions of \ref{thm:hypergraph-theorems}\oldref{itm:hypergraph-matching-theorem},
let $A$ be a uniformly random set of $n$ vertices, and let $B$
be a uniformly random perfect matching on $V\left(H\right)\backslash A$.
Then a.a.s.{} $G_{A,B}\left(H\right)$ has minimum degree at least $\amat n$.}

\item{\label{itm:hypergraph-bipartite-cycle-degree-transfer}Let $H$ satisfy the conditions of \ref{thm:hypergraph-theorems}\oldref{itm:hypergraph-cycle-theorem},
let $A$ be a uniformly random set of $n$ vertices, and let $B$
be a uniformly random loose Hamilton cycle on $V\left(H\right)\backslash A$.
Then a.a.s.{} $G_{A,B}\left(H\right)$ has minimum degree at least $\aham n$.}

\end{enumerate}\end{lem}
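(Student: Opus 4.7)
The plan is to lower-bound both types of vertex degrees in the bipartite graph $G_{A,B}\left(H\right)$, which has parts $A$ and $E\left(B\right)$ both of size $n$ in both cases of the lemma. The bounds for the two sides have quite different flavors: the degree of a vertex in $E\left(B\right)$ depends only on the random set $A$, whereas the degree of a vertex in $A$ requires handling the joint distribution of $\left(A,B\right)$, and that joint-randomness analysis is where the main difficulty lies.

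For a vertex $e\in E\left(B\right)$, note that $\deg_{G_{A,B}\left(H\right)}\left(e\right)=\left|N_{H}\left(e\right)\cap A\right|$, where $N_{H}\left(e\right)\subseteq V\left(H\right)\setminus e$ is the link of the $\left(k-1\right)$-set $e$ and has size at least $\delta_{k-1}\left(H\right)\geq\alpha n$. Since $A$ is a uniformly random $n$-subset of $V\left(H\right)$, $\left|N_{H}\left(e\right)\cap A\right|$ is a hypergeometric random variable with mean $\Omega\left(n\right)$, so a standard Chernoff-type concentration bound yields $\left|N_{H}\left(e\right)\cap A\right|\geq c_{1}n$ with failure probability $e^{-\Omega\left(n\right)}$ for a suitable constant $c_{1}=c_{1}\left(\alpha,k\right)>0$. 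A union bound over all $O\left(n^{k-1}\right)$ possible $\left(k-1\right)$-subsets $e\subseteq V\left(H\right)$ shows that this lower bound holds a.a.s.\ simultaneously for every such $e$, hence in particular for every $e\in E\left(B\right)$.

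For degrees on the $A$-side I would exploit the random-ordering descriptions of the joint law of $\left(A,B\right)$ set up in the paragraphs preceding the lemma: pick a uniformly random ordering of $V\left(H\right)$, let $A$ consist of the first $n$ elements, and form the edges $b^{j}$ of $B$ from the remaining positions (into disjoint $\left(k-1\right)$-tuples in case (a), or into consecutive overlapping $\left(k-1\right)$-tuples around a cycle in case (b)). Fix any $v\in V\left(H\right)$ and, using the obvious symmetry within positions $1,\ldots,n$, condition on $v$ occupying position one (so that $v\in A$); the other positions are then a uniformly random permutation of $V\left(H\right)\setminus\left\{ v\right\} $. Writing $L_{v}=\left\{ e:\left\{ v\right\} \cup e\in H\right\} $ for the link of $v$, we have $\deg_{G_{A,B}\left(H\right)}\left(v\right)=\left|L_{v}\cap E\left(B\right)\right|$, and the degree-transfer inequality recalled at the start of the section gives $\left|L_{v}\right|=d_{H}\left(v\right)\geq\delta_{1}\left(H\right)=\Omega\left(n^{k-1}\right)$. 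By symmetry of the uniform permutation, every $\left(k-1\right)$-subset of $V\left(H\right)\setminus\left\{ v\right\} $ is equally likely to appear as an edge $b^{j}$, so linearity of expectation yields
\[
\E\left[\deg_{G_{A,B}\left(H\right)}\left(v\right)\mid v\in A\right]=\left|L_{v}\right|\cdot\frac{n}{\binom{\left|V\left(H\right)\right|-1}{k-1}}=\Omega\left(n\right).
\]

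The remaining step, and the main obstacle, is concentration of this joint-randomness quantity. I would view $\deg_{G_{A,B}\left(H\right)}\left(v\right)$ as a function of the uniform permutation of $V\left(H\right)\setminus\left\{ v\right\} $ and verify bounded differences under transpositions: swapping any two positions alters at most $O\left(1\right)$ of the sets $b^{j}$ (each position lies in at most one edge in case (a) and at most two in case (b)), and each altered $b^{j}$ toggles only a single indicator, so any transposition changes the function by $O\left(1\right)$. The Azuma--Hoeffding inequality for uniformly random permutations then gives
\[
\Pr\left(\deg_{G_{A,B}\left(H\right)}\left(v\right)<\E\left[\deg_{G_{A,B}\left(H\right)}\left(v\right)\mid v\in A\right]/2\right)\leq e^{-\Omega\left(n\right)}.
\]
A union bound over the $O\left(n\right)$ choices of $v\in V\left(H\right)$ completes the argument, and taking $\a$ smaller than both of the concentration constants produced above yields the claimed uniform minimum-degree bound.
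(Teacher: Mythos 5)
Your proof is correct, and it follows the same high-level template as the paper's (express the two sides of the minimum-degree condition as functionals of a uniformly random ordering of $V(H)$, show the expected degrees are $\Omega(n)$, concentrate, and union bound), but your concentration step is genuinely different on the $A$-side. For the $E(B)$-side your observation that $\deg(e)=|N_H(e)\cap A|$ is hypergeometric is essentially the same as the paper's sequential revelation of $a^1,a^2,\dots$ followed by a Chernoff bound, just packaged differently; the union bound over all $O(n^{k-1})$ potential $(k-1)$-sets is comfortably absorbed by the $e^{-\Omega(n)}$ tail. On the $A$-side, however, the paper reveals the blocks $b^1,b^2,\dots$ one at a time and uses a stochastic domination by a binomial, which in the loose-cycle case (b) forces it to work with $\delta_2(H)$ (since after conditioning on the shared boundary vertex of a block, only $k-2$ fresh vertices are revealed); your symmetry computation of $\E[\deg(v)\mid v\in A]=|L_v|\cdot n/\binom{|V(H)|-1}{k-1}$ followed by McDiarmid/Azuma concentration for permutations under transpositions sidesteps this entirely and only uses $\delta_1(H)=\Omega(n^{k-1})$ in both cases. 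The trade-off is that you invoke a less elementary off-the-shelf concentration inequality for random permutations, whereas the paper's sequential argument uses only Chernoff for binomials. The bounded-differences verification (each position lies in at most one block in case (a) and at most two in case (b), so a transposition changes the degree by $O(1)$) and the choice to condition on $v$ occupying position one by symmetry are both sound, and the final union bound over $|V(H)|=O(n)$ vertices closes the argument correctly.
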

\begin{proof}
[Proof of \ref{lem:hypergraph-bipartite-degree-transfer}\oldref{itm:hypergraph-bipartite-matching-degree-transfer}]As in the preceding discussion, it is convenient to realize the uniform
distribution of $A$ and $B$ via a random ordering of $V\left(H\right)$.
Let 
\[
a^{1},\dots,a^{n},\, b_{1}^{1},\dots,b_{k-1}^{1},\, b_{1}^{2},\dots,b_{k-1}^{n}.
\]
be a uniformly random ordering of $V\left(H\right)$, with $A$, $b^i$ and
$B$ defined as before.

First, condition on some $\left(k-1\right)$-tuple $b\in B$, and imagine that the $a^i$ are chosen one-by-one. Given $b$ and $a^{1},\dots,a^{i}$, this means that $a^{i+1}$ has a uniformly random distribution from the remaining vertices. There are at least $\delta_{k-1}\left(H\right)$ choices of $a$ that will make $b\cup\left\{a\right\}$ an edge of $E\left(H\right)$, so for $i\le\alpha n/2$,
\[
\Pr\left(b\cup\left\{ a^{i+1}\right\} \in E\left(H\right)\mid a^{1},\dots,a^{i}\right)\ge\left(\delta_{k-1}\left(H\right)-i\right)/\left(kn\right)\ge\alpha/\left(2k\right).
\]
The degree of $b$ in $G_{A,B}\left(H\right)$ is the number of edges $b\cup\left\{ a^{i}\right\}$ in $E\left(H\right)$, which we have just shown stochastically dominates a $\Bin(\alpha n/2,\alpha/\left(2k\right))$ distribution.
By the Chernoff bound, $d_{G_{A,B}\left(H\right)}\left(b\right)\ge\alpha^{2}n/\left(8k\right)$
with probability $1-e^{-\Omega\left(n\right)}$. With the union bound, a.a.s.{} $d_{G_{A,B}\left(H\right)}\left(b\right)\ge\alpha^{2}n/\left(8k\right)$ for all $b$.

Now, instead condition on some $a\in A$ and imagine that the $\left(k-1\right)$-tuples $b^i$ are chosen one-by-one from the remaining vertices
(before choosing the rest of vertices of $A$). Note that there are at least $\delta_{1}\left(H\right)$ choices of a $\left(k-1\right)$-tuple $b$ such that $\left\{a\right\}\cup b\in E\left(H\right)$, and note that each $b_{j}^{i}$
shares at most ${kn \choose k-2}$ edges with $a$. Recall that $\delta_1\left(H\right)=\Omega\left(n^{k-1}\right)$, so  if $i\le2\sqrt{\amat}n$ for sufficiently small $\amat$ and large $n$,
then
\[
\Pr\left(\left\{ a\right\} \cup b^{i+1}\in E\left(H\right)\mid b^{1},\dots,b^{i}\right)\ge\left(\delta_{1}\left(H\right)-\left(k-1\right)i{kn \choose k-2}\right)\left.\vphantom{\sum}\right/{kn \choose k-1}\ge\sqrt{\amat}.
\]
By the same argument as in the previous paragraph, using the Chernoff bound and the union bound, a.a.s.{} each $d_{G_{A,B}\left(H\right)}\left(a\right)\ge\amat n$.
\end{proof}

\begin{proof}
[Proof of \ref{lem:hypergraph-bipartite-degree-transfer}\oldref{itm:hypergraph-bipartite-cycle-degree-transfer}]We give
essentially the same proof as for \ref{lem:hypergraph-bipartite-degree-transfer}\oldref{itm:hypergraph-bipartite-matching-degree-transfer}. As in the discussion preceding the lemma, choose a uniformly random ordering
\[
a^{1},\dots,a^{n},b_{0},\dots,b_{\left(k-2\right)n-1},
\]
and let $A$, $b^i$ and $B$ be defined as before.

With exactly the same proof as for \ref{lem:hypergraph-bipartite-degree-transfer}\oldref{itm:hypergraph-bipartite-matching-degree-transfer},
there is small $\aham$ such that a.a.s.{} $d_{G_{A,B}\left(H\right)}\left(b\right)\ge\aham n$
for all $b\in B$. Next, condition on some $a\in A$, and imagine the $b^i$ are chosen one-by-one
(before choosing the rest of vertices of $A$). Note  that the only intersection of $b^i$ with any of $b^0,\dots,b^{i-1}$ is the vertex $b_{\left(k-2\right)i}$, and the other vertices of $b^i$ are chosen uniformly at random from what remains. Also note that each $b_j$ shares at most ${\left(k-1\right)n \choose k-3}$ edges with both $a$ and $b_{\left(k-2\right)i}$. So, for small $\beta_k$, large $n$
and $1\le i\le2\sqrt{\aham}n$,
\[
\Pr\left(\left\{ a\right\} \cup b^{i}\in E\left(H\right)\mid b^{0},\dots,b^{i-1}\right)\ge\left(\delta_{2}\left(H\right)-\left(k-2\right)i{\left(k-1\right)n \choose k-3}\right)\left.\vphantom{\sum}\right/{ \left(k-1\right)n \choose k-2}\ge\sqrt{\aham}.
\]
By the same argument as before, a.a.s.{} each $d_{G_{A,B}\left(H\right)}\left(a\right)\ge\aham n$.
\end{proof}
We have established that if $\chyp\left(\alpha\right)$ 
is large enough then $G_{A,B}\left(H\right)$ is a.a.s.{} a bipartite graph with minimum degree $\a\left(\alpha\right)n$, and $G_{A,B}\left(R\right)$ contains a $\left(1-\xi\left(\a\left(\alpha\right)\right)\right)n$-edge sub-matching $G_{A,B}\left(Q\right)$ of the uniformly random perfect matching $G_{A,B}\left(\bar{Q}\right)$. \ref{lem:bipartite-plus-big-matching-perfect} then
ensures the existence of a perfect matching in $G_{A,B}\left(H\cup R\right)$. This corresponds to a perfect matching or loose Hamilton cycle in $H\cup R$.

\section{\label{sec:digraphs}Pancyclicity in dense digraphs}

In this section we prove \ref{lem:pseudorandom-pancyclic,thm:smoothed-pancyclic}. One motivation to consider pancyclicity instead of just Hamiltonicity is an observation by Bondy (see \cite{Bon75}),
that almost all known non-trivial conditions that ensure Hamiltonicity
also ensure pancyclicity. He even made an informal ``metaconjecture''
that this was always the case; our \ref{thm:smoothed-pancyclic} verifies
his metaconjecture in the setting of randomly perturbed dense graphs
and digraphs.

\ref{thm:smoothed-pancyclic} obviously implies \cite[Theorems~1 and~3]{BFM03}.
We do not fight very hard to optimize constants, but we note that if we make more careful calculations with our proof approach, then the resulting values of $\cdi\left(\alpha\right)$ seem to be better
than those found in \cite{BFM03}, for most values of $\alpha$.

We now turn to the proof of \ref{lem:pseudorandom-pancyclic}, which
will follow from the corresponding result for Hamiltonicity.
\begin{lem}
\label{lem:pseudorandom-hamiltonian}Let $D$ be a directed graph
with all in- and out- degrees at least $4k$, and suppose for every pair of
disjoint sets $A,B\subseteq V\left(D\right)$ with $\left|A\right|=\left|B\right|\ge k$,
there is an arc from $A$ to $B$. Then $D$ is Hamiltonian.\end{lem}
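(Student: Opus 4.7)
I argue by contradiction using the rotation-extension technique for directed graphs. Suppose $D$ has no Hamilton cycle and let $P = v_1 v_2 \cdots v_m$ be a longest directed path in $D$. By the maximality of $P$, every out-neighbour of $v_m$ and every in-neighbour of $v_1$ lies on $V(P)$, so each of these endpoints has at least $4k$ neighbours on $V(P)$.

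The directed rotation operates as follows: given any directed path $Q = u_1 u_2 \cdots u_m$ on the vertex set $V(P)$ and a pair of arcs $u_m \to u_j$ and $u_{j-1} \to u_k$ with $1 < j < k \leq m$, the sequence $u_1 \cdots u_{j-1} u_k u_{k+1} \cdots u_m u_j u_{j+1} \cdots u_{k-1}$ is another directed path on $V(P)$, now ending at $u_{k-1}$. Let $E \subseteq V(P)$ be the set of all vertices that occur as endpoints of such paths obtained from $P$ by iterated directed rotations. The core technical step is to show that $|E| \geq k$: for any current endpoint $w \in E$ with corresponding path $Q_w$, the vertex $w$ has at least $4k$ out-neighbours on $V(P)$, each a potential rotation pivot. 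A P\'osa-style analysis, adapted to the two-arc nature of directed rotations, combines this abundance with the expansion condition to show that $|E| < k$ would force some rotation to produce an endpoint outside $E$, yielding a contradiction.

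Once $|E| \geq k$, note that by maximality every path in our collection is itself a longest path, so its endpoint has all out-neighbours in $V(P)$; hence $E \cup N^+(E) \subseteq V(P)$. The expansion condition therefore forces $|U| = |V(D)\setminus V(P)| \leq |V(D)\setminus(E\cup N^+(E))| \leq k-1$. If $|U| = 0$, then $P$ is a Hamilton path; a symmetric rotation argument at the start of $P$ (equivalently, rotations at the end of the reverse digraph) produces a set $F$ of possible start-vertices with $|F| \geq k$, and the expansion condition applied to $E$ and $F$ gives an arc from some $e \in E$ to some $f \in F$, which together with the corresponding rotations closes $P$ into a Hamilton cycle, the desired contradiction. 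If $1 \leq |U| \leq k-1$, then each $u \in U$ has at least $4k - (|U|-1) > 3k$ in- and out-neighbours in $V(P)$; combining this high local density with the large supply of rotated paths (indexed by $E$ and its start-side analogue $F$), I find a rotated path $Q$ with a consecutive pair $a, b$ satisfying $a \to u \to b$ for some $u \in U$, and inserting $u$ between $a$ and $b$ produces a path strictly longer than $P$, contradicting maximality.

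The main technical obstacle is proving $|E| \geq k$. A directed rotation requires two coordinated arcs, rather than the single arc of the undirected P\'osa mechanism, so care is needed to track which $(j,k)$ pairs yield valid rotations and to show that the $4k$ available out-neighbours of the current endpoint, together with the expansion condition, suffice to push $|E|$ past $k$. The subsequent handling of $1 \leq |U| \leq k-1$ is also delicate: the expansion condition no longer applies directly to sets of size less than $k$, and we must combine rotations from both ends with the high in- and out-degree of $U$-vertices in $V(P)$ to locate an insertable position.
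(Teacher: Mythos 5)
Your plan takes a rotation–extension route, which is genuinely different from the paper's proof. The paper never iterates rotations: it takes a longest path $P=u,\dots,w$, defines four explicit constant-size sets near the two ends ($U_1,U_2\subseteq N^-(u)$ and $W_1,W_2\subseteq N^+(w)$), argues that one of two interleaving patterns on $P$ must occur, and in each pattern performs a bounded number of rewirings (using the expansion hypothesis on $k$-sets once or twice) to produce a cycle on $V(P)$; strong connectivity then finishes. It is a constant-depth, finite case analysis, not an iterative endpoint-growing argument.

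The gap in your proposal is precisely the step you flag as the ``main technical obstacle'': showing $|E|\ge k$. This is asserted via ``a P\'osa-style analysis, adapted to the two-arc nature of directed rotations'' but is not carried out, and it is not a routine adaptation. In the undirected P\'osa lemma, a single arc from the current endpoint creates a new endpoint, so the endpoint set $E$ expands by a neighborhood argument. In your directed rotation, each pivot needs \emph{two} simultaneous arcs, $u_m\to u_j$ and $u_{j-1}\to u_k$, and the expansion condition in the lemma only guarantees a single arc between any two disjoint $k$-sets. There is no obvious way to get the required coordinated pair from ``$w$ has $\ge 4k$ out-neighbours on $P$'' plus expansion, and known directed analogues of P\'osa's lemma invoke substantially more structure (e.g.\ pseudorandomness conditions that control how arcs fall between many pairs of intervals). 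Until this step is justified, the argument does not close. A secondary, smaller gap is in the $1\le |U|\le k-1$ case: having $3k$ in- and out-neighbours of $u$ on $V(P)$ does not by itself give a rotated path on which some in-neighbour and out-neighbour of $u$ are \emph{consecutive}; you would need a further argument (and again expansion only speaks to $k$-sets). By contrast, the paper's approach never needs either of these claims, which is why its proof goes through directly from the $4k$-degree and $k$-set expansion hypotheses.
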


The idea of the proof is to start with a longest path $P$ and to manipulate
it into a cycle $C$ on the same vertex set. We will show that $D$
is strongly connected, so if $C$ were not Hamiltonian, there would
be an arc from $V\left(C\right)$ to its complement, which could be
combined with $C$ to give a longer path than $P$, contradicting
maximality. This type of argument goes back to the proof of Dirac's
theorem \cite[Theorem~3]{Dir52}. It also bears some resemblance to
the ``rotation-extension'' idea introduced by P\'osa in \cite{Pos76}, and
a variation for directed graphs by Frieze and Krivelevich \cite[Section~4.3]{FK05}.
\begin{proof}
[Proof of \ref{lem:pseudorandom-hamiltonian}]First we acknowledge
some immediate consequences of the condition on $D$. Note that if
$A$ and $B$ are disjoint sets with size at least $k$, then in fact
there are at least $\left|A\right|-k$ vertices of $A$ with an arc into $B$.
To see this, note that for any fewer number of such vertices in $A$,
we can delete those vertices and at least $k$ will remain, one of
which has an arc to $B$. Also, $D$ is strongly connected. To see
this, note that for any $v,w$, both of $N^{+}\left(v\right)$ and
$N^{-}\left(w\right)$ have size at least $4k>k$. If they intersect
then there is a length-2 path from $v$ to $w$; otherwise there must
be an arc from $N^{+}\left(v\right)$ to $N^{-}\left(w\right)$ giving
a length-3 path.

Let $P=u,\dots,w$ be a maximum-length directed path in $D$. We will
use the notation $v^{+}$ (respectively $v^{-}$) for the successor
(respectively predecessor) of a vertex $v$ on $P$, and also write
$U^{+},U^{-}$ for the set of successors or predecessors of a set
of vertices $U$.

By maximality, $N^{+}\left(w\right)\subset P$ and $N^{-}\left(u\right)\subset P$.
Let $U_{1}$ be the first $3k$ elements of $N^{-}\left(u\right)$
on $P$, and let $U_{2}$ be the last $k$ (note $U_{1}\cap U_{2}=\varnothing$).
Similarly let $W_{1}$ be the first $k$ and $W_{2}$ the last $3k$
elements of $N^{+}\left(w\right)$. We will now show that there is
a cycle on the vertex set $V\left(P\right)$.

First, consider the case where each vertex of $W_{1}$ precedes each
vertex of $U_{2}$. If $wu$ is in $D$ then we can immediately close
$P$ into a cycle. Otherwise, $\left|W_{1}^{-}\right|=\left|W_{1}\right|=\left|U_{2}^{+}\right|=\left|U_{2}\right|=k$,
so there is an arc $w_{1}u_{2}$ from $W_{1}^{-}$ to $U_{2}^{+}$.
This is enough to piece together a cycle on $V\left(P\right)$: start
at $u_{2}$ and move along $P$ to $w$, from where there is a shortcut
back to $w_{1}^{+}$. Now move along $P$ from $w_{1}^{+}$ to $u_{2}^{-}$,
from where we can jump back to $u$, then move along $P$ to $w_{1}$,
then jump to $u_{2}$. See \ref{fig:pseudorandom-hamiltonian-case-1}
for an illustration.

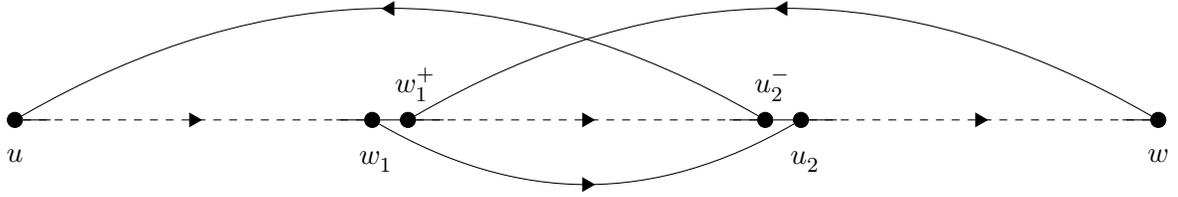
\begin{figure}[H]
\begin{center}
\hspace{0.5cm}
\begin{tikzpicture}[scale=0.95]

\node[vertex] (u) at  (0,0) {};
\node[vertex] (w1) at  (5,0) {};
\node[vertex] (w1p) at  (5.5,0) {};
\node[vertex] (u2m) at  (10.5,0) {};
\node[vertex] (u2) at (11,0) {};
\node[vertex] (w) at (16,0) {};

\draw (u) to (0.5,0);
\draw[arc,dashed] (u) to (w1);
\draw (4.5,0) to (w1);
\draw (w1) to (w1p);
\draw (w1p) to (6,0);
\draw[arc,dashed] (w1p) to (u2m);
\draw (10,0) to (u2m);
\draw (u2m) to (u2);
\draw (u2) to (11.5,0);
\draw[arc,dashed] (u2) to (w);
\draw (15.5,0) to (w);
\draw[arc] (w) to [bend right] (w1p);
\draw[arc] (u2m)  to [bend right] (u);
\draw[arc] (w1) to [bend right] (u2);

\node[larrow] at (5.25,1.56) {};
\node[larrow] at (10.75,1.56) {};
\node[rarrow] at (2.5,0) {};
\node[rarrow] at (8,0) {};
\node[rarrow] at (13.5,0) {};
\node[rarrow] at (8,-0.905) {};

\node at (5.1,-0.5){$w_1^{\phantom+}$};
\node at (5.6,0.5){$w_1^+$};
\node at (10.6,0.5){$u_2^-$};
\node at (11.1,-0.5){$u_2^{\phantom-}$};
\node at (0,-0.5){$u$};
\node at (16,-0.5){$w$};

\end{tikzpicture}
\hspace{0.5cm}
\end{center}

\protect\caption{\label{fig:pseudorandom-hamiltonian-case-1}The case where the vertices
of $W_{1}$ precede the vertices of $U_{2}$. The horizontal line
through the center is $P$; the broken lines indicate subpaths.}
\end{figure}

Otherwise, each vertex of $U_{1}$ precedes each vertex of $W_{2}$.
Let $U_{12}$ contain the $k$ elements of $U_{1}$ furthest down
the path. Note that there are at least $2k$ vertices of $P$ (e.g., vertices of $U_1\backslash U_{12}$) preceding all vertices in $U_{12}$. 
Let $U_{11}$ be the set of vertices among those first $2k$
vertices of $P$ which have an arc to $U_{12}^{+}$. By the discussion
at the beginning of the proof, $\left|U_{11}\right|\ge k$. Similarly,
let $W_{21}$ contain the $k$ elements of $W_{2}$ first appearing
on the path, and let $W_{22}$ be the set of at least $k$ vertices
among the last $2k$ on $P$ which have an arc from $W_{21}^{-}$.
By the condition on $D$, there is an arc $w_{22}u_{11}$ from $W_{22}^{-}$
to $U_{11}^{+}$. By definition, there is $u_{12}\in U_{12}^{+}$
and $w_{21}\in W_{21}^{-}$ such that the arcs $u_{11}^{-}u_{12}$
and $w_{21}w_{22}^{+}$ are in $D$. We can piece everything together
to get a cycle on the vertices of $P$: start at $u_{11}$, move along
$P$ until $u_{12}^{-}$, then jump back to $u$. Move along $P$
until $u_{11}^{-}$, then take the shortcut to $u_{12}$. Continue
along $P$ to $w_{21}$, jump to $w_{22}^{+}$, continue to $w$,
jump back to $w_{21}^{+}$, and continue to $w_{22}$. From here there
is a shortcut back to $u_{11}$. See \ref{fig:pseudorandom-hamiltonian-case-2}.

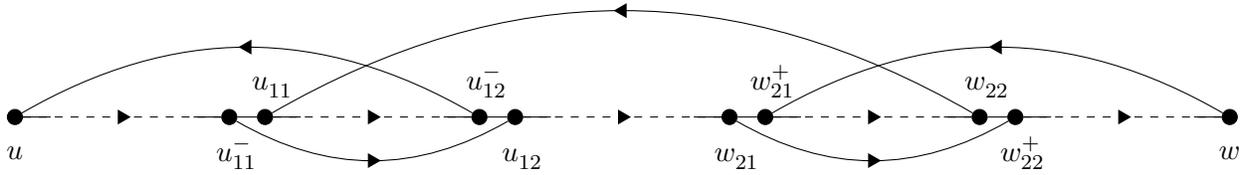
\begin{figure}[H]
\begin{center}
\hspace{0.5cm}
\begin{tikzpicture}[scale=0.95]

\node[vertex] (u) at  (0,0) {};
\node[vertex] (u11m) at  (3,0) {};
\node[vertex] (u11) at  (3.5,0) {};
\node[vertex] (u12m) at  (6.5,0) {};
\node[vertex] (u12) at  (7,0) {};
\node[vertex] (w21) at  (10,0) {};
\node[vertex] (w21p) at  (10.5,0) {};
\node[vertex] (w22) at  (13.5,0) {};
\node[vertex] (w22p) at  (14,0) {};
\node[vertex] (w) at  (17,0) {};

\draw (u) to (0.5,0);
\draw[arc,dashed] (u) to (u11m);
\draw (2.5,0) to (u11m);
\draw (u11m) to (u11);
\draw (u11) to (4,0);
\draw[arc,dashed] (u11) to (u12m);
\draw (u12m) to (6,0);
\draw (u12m) to (u12);
\draw (u12) to (7.5,0);
\draw[arc,dashed] (u12) to (w21);
\draw (9.5,0) to (w21);
\draw (w21) to (w21p);
\draw (w21p) to (11,0);
\draw[arc,dashed] (w21p) to (w22);
\draw (13,0) to (w22);
\draw (w22) to (w22p);
\draw (w22p) to (14.5,0);
\draw[arc,dashed] (w22p) to (w);
\draw (16.5,0) to (w);
\draw[arc] (u12m) to [bend right] (u);
\draw[arc] (u11m)  to [bend right] (u12);
\draw[arc] (w21) to [bend right] (w22p);
\draw[arc] (w) to [bend right] (w21p);
\draw[arc] (w22) to [bend right] (u11);

\node[larrow] at (3.25,0.98) {};
\node[larrow] at (13.75,0.98) {};
\node[larrow] at (8.5,1.49) {};
\node[rarrow] at (1.5,0) {};
\node[rarrow] at (5,0) {};
\node[rarrow] at (8.5,0) {};
\node[rarrow] at (12,0) {};
\node[rarrow] at (15.5,0) {};
\node[rarrow] at (5,-0.61) {};
\node[rarrow] at (12,-0.61) {};

\node at (0,-0.5){$u$};
\node at (3.1,-0.5){$u_{11}^-$};
\node at (3.6,0.5){$u_{11}^{\phantom-}$};
\node at (6.6,0.5){$u_{12}^-$};
\node at (7.1,-0.5){$u_{12}^{\phantom-}$};
\node at (10.1,-0.5){$w_{21}^{\phantom+}$};
\node at (10.6,0.5){$w_{21}^+$};
\node at (13.6,0.5){$w_{22}^{\phantom+}$};
\node at (14.1,-0.5){$w_{22}^+$};
\node at (17,-0.5){$w$};

\end{tikzpicture}
\hspace{0.5cm}
\end{center}

\protect\caption{\label{fig:pseudorandom-hamiltonian-case-2}The case where the vertices
of $U_{1}$ precede the vertices of $W_{2}$.}
\end{figure}

As outlined, the fact that $D$ is strongly connected, combined with
the fact that the vertex set of $P$ induces a cycle $C$, implies
that $C$ is a Hamilton cycle.
\end{proof}

We note that with some effort, the ideas in the proof of
\ref{lem:pseudorandom-hamiltonian} can be used directly to prove
 \ref{lem:pseudorandom-pancyclic} with a weaker degree condition.
We do not know whether the condition can be weakened all the way to
$4k$, as Bondy's metaconjecture would suggest. Also, the constants in \ref{lem:pseudorandom-pancyclic,lem:pseudorandom-hamiltonian}
can both be halved for the undirected case, just by simplifying the
main argument in the proof of \ref{lem:pseudorandom-hamiltonian}.

\begin{proof}
[Proof of \ref{lem:pseudorandom-pancyclic}]Fix a vertex $v$. Let
$U^{+}$ and $U^{-}$ be arbitrary disjoint $k$-subsets of $N^{+}\left(v\right)$
and $N^{-}\left(v\right)$ respectively. There is an arc from $U^{+}$
to $U^{-}$ which immediately gives a 3-cycle.

Next, let $W^{+}$ be the set of (fewer than $k$) vertices with no
arc from $U^{+}$, and similarly let $W^{-}$ be the set of vertices
with no arc into $U^{-}$. Now consider the induced digraph $D'$
obtained from $D$ by deleting $v$ and the vertices in $U^{+},U^{-},W^{+},W^{-}$.
Since we have removed fewer than $4k$ vertices, $D'$ satisfies the
conditions of \ref{lem:pseudorandom-hamiltonian} so has a Hamilton
cycle. In particular, for every $\ell$ satisfying $4\le\ell\le n-4k$,
there is a path $P_{\ell}=u_{\ell},\dots, w_{\ell}$ in $D'$ of length
$\ell-4$. By construction, there is an arc from $U^{+}$ to $u_{\ell}$
and from $w_{\ell}$ to $U^{-}$, which we can combine with arcs to
and from $v$ to get a cycle of length $\ell$.

Finally, for every $\ell>n-4k$, arbitrarily delete vertices from
$D$ to obtain an induced digraph $D''$ with $\ell$ vertices which
satisfies the conditions of \ref{lem:pseudorandom-hamiltonian}. Since
$D''$ has a Hamilton cycle, $D$ has a cycle of length $\ell$.
\end{proof}

\begin{proof}
[Proof of \ref{thm:smoothed-pancyclic}]In view of the discussion
in the introduction, we assume each possible arc is present in $R$
with probability $p=\cdi n/\left(2{n \choose 2}\right)\ge\cdi/n$ independently. (Recall that digraphs are allowed to have 2-cycles).

If $A,B\subseteq V\left(D\right)$ are disjoint sets with $\left|A\right|=\left|B\right|=\alpha n/8$,
the probability that there are no arcs from $A$ to $B$ in $D\cup R$
is at most
\[
\left(1-p\right)^{\left(\alpha n/8\right)^{2}}\le e^{-p\alpha^{2}n^{2}/64}\le e^{-\cdi\alpha^{2}n/64}.
\]
The number of choices of such pairs of disjoint sets $A,B$ is at most $2^{2n}$. By the union bound,
the probability that $D\cup R$ does not satisfy the condition of
\ref{lem:pseudorandom-hamiltonian} is at most $2^{2n}e^{-\cdi\alpha^{2}n/64}$, which 
converges to zero for sufficiently large $\cdi$. Therefore the digraph $D\cup R$ is a.a.s.{} 
pancyclic by \ref{lem:pseudorandom-hamiltonian}.\end{proof}

\section{\label{sec:tournaments}Hamilton cycles in tournaments}

There are several seemingly different conditions that are equivalent
to Hamiltonicity for tournaments (see \cite[Chapters~2-3]{Moo68}).
A tournament is Hamiltonian if and only if it is irreducible (cannot
be divided into two parts with all arcs between the two parts
in the same direction), if and only if it is strongly connected (has
a directed path from every vertex to every other), if and only if
it is pancyclic (contains cycles of all lengths). All tournaments
have a Hamilton path, and it was first proved by Moon and Moser in \cite{MM62} that
a uniformly random tournament is a.a.s.{} irreducible, hence Hamiltonian.

More recently, K{\"u}hn, Lapinskas, Osthus and Patel \cite{KLOP14} proved in  that if a tournament
is $t$-strongly connected (it remains strongly connected after the
deletion of $t-1$ vertices), then it has $\Omega\left(\sqrt{t}/\log t\right)$
arc-disjoint Hamilton cycles. (This was improved to $\Omega\left(\sqrt{t}\right)$
by Pokrovskiy \cite{Pok14}). Therefore, to show a tournament has $q$ arc-disjoint Hamilton cycles for any $q=O\left(1\right)$, it suffices to show that the tournament is $t$-strongly connected for any fixed $t$. In particular, it is not difficult to show that a random tournament is a.a.s.{} $t$-strongly connected for fixed $t$, which motivates \ref{thm:tournament}.

Before we proceed to the proof, we first explain why \ref{thm:tournament}
is sharp. The ``obvious'' worst case for $T$ is a \emph{transitive}
tournament (corresponding to a linear order on the vertices). In this
case, a superlinear number of random edges must be flipped in order to a.a.s.{}
flip one of the arcs pointing away from the least element of the linear
order. Actually, {\L}uczak, Ruci{\'n}ski and Gruszka \cite{LRG96} have already studied the model where random edges are flipped in a transitive
tournament, by analogy to
the evolution of the random graph.

More generally, consider a ``transitive cluster-tournament'' $T$
on $n=r\left(2d+1\right)$ vertices defined as follows. Let $R$ be a regular tournament
on $2d+1$ vertices (this means every vertex has indegree and outdegree $d$). To construct $T$, start with $r$ disjoint copies
$R_{1},\dots,R_{r}$ of $R$, then put an arc from $v$ to $w$ for
every $v\in R_{i}$, $w\in R_{j}$ with $i<j$. In order for the perturbed
tournament $P$ to be Hamiltonian, there must be an arc entering $R_{1}$,
so one of the $O\left(n\left(d+1\right)\right)$ arcs exiting $R_{1}$
must be changed. This will not happen a.a.s.{} unless $m=\omega\left(n/\left(d+1\right)\right)$.

We now prove \ref{thm:tournament}. In accordance with the discussion
in the introduction, we will work with the model where each edge is
flipped with probability $p$ independently, where $2p{n \choose 2}=m$.
(Designating an edge for resampling with probability $2p$ is the
same as flipping it with probability $p$). Note that $p\le1/2$ and in particular $p\le1-p$.

Fix $t$, and let $T$ be a tournament with $n$ vertices
and all in- and out- degrees at least $d$. As described above, flip each edge with probability $p=\omega\left(1/\left(n\left(d+1\right)\right)\right)$, to obtain a perturbed tournament $P$. We will prove that $P$ is a.a.s.{} $t$-strongly connected.

The idea of the proof is to choose a set $S$ of $t$ vertices with a
large indegree and outdegree, then to show that with high probability
almost every vertex has many paths to and from each vertex in $S$.
The probability that a vertex $v$ has paths to and from from $S$ is smallest
if $v$ has small indegree, so we need to show that not many vertices
can have small indegree.
\begin{lem}
\label{lem:extreme-indegree-bound}In any tournament, there are fewer
than $k$ vertices with indegree (respectively outdegree) less than
$\left(k-1\right)/2$.\end{lem}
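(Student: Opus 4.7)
The plan is a short double-counting/averaging argument restricted to a would-be set of low-indegree vertices. Suppose for contradiction that $S$ is a set of $k$ vertices each having indegree (in the full tournament $T$) strictly less than $(k-1)/2$. I would then look at the subtournament $T[S]$ induced on $S$, which contains exactly $\binom{k}{2}=k(k-1)/2$ arcs, so the sum of the indegrees of the vertices of $S$ \emph{within} $T[S]$ equals $k(k-1)/2$.

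On the other hand, for every $v\in S$ the indegree of $v$ inside $T[S]$ is bounded above by the indegree of $v$ in $T$, which by assumption is strictly less than $(k-1)/2$. Summing over the $k$ vertices of $S$ gives a total strictly less than $k(k-1)/2$, contradicting the previous exact count. Hence no such set $S$ exists, and at most $k-1$ vertices can have indegree less than $(k-1)/2$. The outdegree statement follows by applying the same argument to the tournament obtained by reversing every arc (which swaps the roles of in- and outdegree).

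There is essentially no obstacle here — the only thing to be careful about is the strictness of the inequality $d^-(v)<(k-1)/2$ matching the strictness needed in $\sum_{v\in S}d^-_{T[S]}(v)<k(k-1)/2$, which is precisely what makes the contradiction with the exact identity $\sum_{v\in S}d^-_{T[S]}(v)=\binom{k}{2}$ go through. So the whole lemma reduces to this one-line averaging observation, and no further structural input about the tournament is required.
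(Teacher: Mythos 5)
Your proof is correct and uses essentially the same double-counting argument as the paper: both rely on the observation that the indegree sum inside the induced subtournament on any $k$ vertices equals $\binom{k}{2}$, so some vertex in any set of $k$ must have indegree at least $(k-1)/2$ there, hence at least that in the full tournament.
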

\begin{proof}
The sum of indegrees (respectively outdegrees) of a tournament on
$k$ vertices is ${k \choose 2}$, because each arc contributes 1
to this sum. Therefore in every set of $k$ vertices of a tournament,
there is a vertex of outdegree (indegree) at least $\left(k-1\right)/2$
in the induced sub-tournament.
\end{proof}
A consequence of \ref{lem:extreme-indegree-bound} is that the set
of all vertices with indegree (respectively outdegree) less than $n/6$
in $T$, has size smaller than $n/3$. So, there are at least $n/3$ vertices
whose indegree and outdegree in $T$ are both at least $n/6$. We can therefore choose
a set $S$ of $t$ such vertices.

Now, we prove that the random perturbation typically does not reduce
the in- and out- degrees very much. 
\begin{lem}
\label{lem:preserve-degree}If a vertex $v$ has outdegree (respectively
indegree) $k$ in $T$, then it has outdegree (respectively indegree)
at least $k/3$ in the randomly perturbed tournament $P$, with probability $1-o\left(e^{-k}\right)-o\left(1/n\right)$
(uniformly over $k$).\end{lem}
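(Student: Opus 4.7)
The plan is to write the outdegree of $v$ in $P$ as a sum of two independent binomials and then apply standard concentration. Let $X$ count the out-arcs at $v$ in $T$ that are not flipped (so remain out-arcs in $P$), and let $Y$ count the in-arcs at $v$ that are flipped (so become out-arcs). Since each arc is flipped independently with probability $p \le 1/2$, we will have $X \sim \Bin(k, 1-p)$ and $Y \sim \Bin(n-1-k, p)$ independently, and the outdegree of $v$ in $P$ is exactly $X+Y$.

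Because $p \le 1/2$, $\E[X] = k(1-p) \ge k/2$. Since $Y \ge 0$, the failure event $\{X+Y < k/3\}$ is contained in $\{X < k/3\}$; the multiplicative Chernoff bound applied to $X$ then gives failure probability $e^{-\Omega(k)}$, which accounts for an $e^{-k}$-type term whenever $k$ is at least moderately large. When $p$ is small, one may sharpen this to a genuine $o(e^{-k})$ by estimating $\Pr(X < k/3) = \Pr(\Bin(k,p) > 2k/3) \le \binom{k}{\lceil 2k/3 \rceil}p^{2k/3}$, where the factor $p^{2k/3}$ provides the necessary decay. When $p$ is not small, on the other hand, $\E[Y] = (n-1-k)p$ is large, and the concentration of $Y$ alone forces $X+Y \ge k/3$ except on an event of probability $\exp(-\Omega(n))$.

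The $o(1/n)$ piece is needed to handle small $k$, for which $e^{-\Omega(k)}$ is too weak. In that regime, the dominant failure mode is $\{X = 0\}\cap\{Y = 0\}$, of probability $p^k(1-p)^{n-1-k} \le e^{-p(n-1-k)}$. The hypothesis $m = \omega(n/(d+1))$, which translates via $2p\binom{n}{2}=m$ into $pn \to \infty$, will make this factor $o(1/n)$. The main technical obstacle is juggling the case analysis---small versus large $k$ against small versus large $p$---so that the two bounds $o(e^{-k})$ and $o(1/n)$ combine uniformly in $k$; the underlying estimates are all routine Chernoff-type computations.
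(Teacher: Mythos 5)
Your decomposition of the post-perturbation outdegree as $X+Y$, with $X\sim\Bin(k,1-p)$ counting surviving out-arcs and $Y\sim\Bin(n-1-k,p)$ counting newly created ones, is exactly what the paper's proof uses implicitly, and your instinct that a two-dimensional case split in $p$ and $k$ is required is also correct. The paper handles it by the cases $p<1/\sqrt{n}$ (direct tail bound on $X$: $\Pr(X<k/3)\le 2^{k}p^{2k/3}$, which is $o(e^{-k})$ uniformly), $k\ge\sqrt{n}/2$ (Hoeffding on $X$: $e^{-\Omega(k)}=e^{-\Omega(\sqrt{n})}=o(1/n)$), and the residual case $p\ge1/\sqrt{n}$, $k<\sqrt{n}/2$ (concentration of $Y$: $\E[Y]\ge(n/2)p\ge\sqrt{n}/2\gg k/3$, so $\Pr(Y<k/3)=e^{-\Omega(\sqrt{n})}=o(1/n)$).

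Where your outline goes wrong is the small-$k$ step. You claim $m=\omega(n/(d+1))$ together with $2p\binom{n}{2}=m$ gives $pn\to\infty$; in fact it gives only $pn(d+1)\to\infty$, and when $d$ grows with $n$ (which \ref{thm:tournament} allows) $pn$ need not diverge and can even tend to zero. The lemma is stated for an arbitrary $p\le1/2$ and the paper's proof uses nothing about $pn$; so the $\Pr(X=0,Y=0)=p^{k}(1-p)^{n-1-k}\le e^{-p(n-1-k)}$ route cannot be rescued by that hypothesis. The correct fix, which the paper uses, is to subsume the small-$k$ case into the existing split: for $p<1/\sqrt{n}$ the factor $p^{k}$ alone already gives $o(e^{-k})$ uniformly, and for $p\ge1/\sqrt{n}$ the concentration of $Y$ already gives $o(1/n)$, without ever needing to control $\Pr(X=Y=0)$ directly. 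Relatedly, your ``$p$ not small $\Rightarrow$ concentrate $Y$'' case quietly breaks when $k$ is close to $n$ (then $\E[Y]=(n-1-k)p$ can fall below $k/3$); that region must be handed off to the Chernoff bound on $X$, which becomes $o(1/n)$ once $k$ is at least a large constant times $\log n$. You gesture at this but do not spell out the handoff, and as written the three regimes you name do not tile the $(p,k)$ plane. The shape of the argument is right, but the small-$k$ claim rests on a false implication and needs to be replaced.
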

\begin{proof}
We only prove the statement where $v$ has outdegree $k$; the indegree
case is identical.

There are $k$ arcs pointing away from $v$ in $T$. If $p<1/\sqrt{n}$ then the probability more than  $2k/3$ of those arcs are changed by the perturbation is at most $2^kp^{2k/3}=e^{-k\Omega\left(\log n\right)}=o\left(e^{-k}\right)$. If $k\ge\sqrt{n}/2$ then by the Chernoff bound and the fact that $p\le1/2$, the probability more than $2k/3$ arcs are changed is $e^{-\Omega\left(\sqrt{n}\right)}=o\left(1/n\right)$. In both of these cases, $k/3$ of the original out-neighbours survive
the perturbation with the required probability.

The remaining case is where $p\ge1/\sqrt{n}$ and $k<\sqrt{n}/2$.
In this case there is a set of $n/2$ arcs pointing towards $v$ in
$T$. The Chernoff bound says that the probability less than $k/3$
of these arcs are changed is $e^{-\Omega\left(\sqrt{n}\right)}=o\left(1/n\right)$.
That is, with the required probability, $k/3$ new out-neighbours
are added by the perturbation.\end{proof}
\begin{lem}
\label{lem:path-from-home}Suppose $w$ has outdegree (respectively
indegree) at least $n/6$ in $T$, and $v$ is a vertex different
from $w$ with indegree (respectively outdegree) at least $k$ in
$T$. Then with probability $1-o\left(e^{-k/\left(d+1\right)}\right)-o\left(1/n\right)$
(uniformly over $k$), there are $t'=3t$ internally vertex-disjoint
paths of length at most 3 from $w$ to $v$ (respectively from $v$
to $w$) in the randomly perturbed tournament $P$.\end{lem}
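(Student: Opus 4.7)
The plan is to greedily construct $3t$ internally-disjoint paths of length at most $3$ from $w$ to $v$ in $P$, routing each one through a distinct in-neighbour of $v$ in $P$; the ``$v$-to-$w$'' case is symmetric.

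First, \ref{lem:preserve-degree} guarantees that $w$ retains at least $n/18$ out-neighbours in $P$ with probability $1-o(e^{-n/6})-o(1/n)$, well within the error budget. Call this set $A'$. Next, I argue $|B'|:=|N^-_P(v)|\ge 3t$ with probability $1-o(e^{-k/(d+1)})-o(1/n)$. Since $T$ has minimum in-degree $d$, the in-degree of $v$ in $T$ is actually $\ge\max(k,d)$. When $\max(k,d)\ge 9t$, \ref{lem:preserve-degree} (applied to in-degrees) yields $|B'|\ge\max(k,d)/3\ge 3t$ with probability $1-o(e^{-\max(k,d)})-o(1/n)$, which is in turn at least $1-o(e^{-k/(d+1)})-o(1/n)$. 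Otherwise $k$ and $d$ are bounded constants, so $e^{-k/(d+1)}=\Theta(1)$ and it suffices to get $|B'|\ge 3t$ with probability $1-o(1)$; but in this regime $m=\omega(n)$ forces $p=\omega(1/n)$, so the number of out-arcs of $v$ flipped by the perturbation stochastically dominates a $\Bin(\Omega(n),p)$ with mean $\omega(1)$, which exceeds $3t$ with probability $1-o(1)$.

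Conditioning on $A'$ and $B'$, I fix $3t$ distinct targets $y_1,\dots,y_{3t}\in B'\setminus\{w\}$ and process them in turn. If $y_i\in A'$, take the length-$2$ path $w\to y_i\to v$; otherwise, search for an unused $x_i\in A'\setminus(\{v,y_1,\dots,y_{3t}\}\cup\{x_j:j<i\})$ with $x_iy_i\in P$, giving the length-$3$ path $w\to x_i\to y_i\to v$. At each step the candidate pool for $x_i$ has size at least $n/18-6t-1\ge n/20$, and for each such $x$ the event $xy_i\in P$ depends on a distinct edge-flip of $T$, so these events are mutually independent and each has probability at least $\min(p,1-p)\ge p$. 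Thus step $i$ fails with probability at most $(1-p)^{n/20}\le e^{-pn/20}$, and a union bound over the $3t$ steps gives a failure probability of $O(e^{-pn/20})$ for this phase.

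It remains to verify this fits the stated budget. Since $pn=2m/n=\omega(1/(d+1))$, we have $e^{-pn/20}=e^{-\omega(1/(d+1))}$, which is comfortably $o(e^{-k/(d+1)})+o(1/n)$ whenever $k/(d+1)$ is dominated by $pn$ (in particular whenever $k$ is bounded). The main obstacle is the regime where $k$ is so large that $k/(d+1)$ is comparable to or exceeds $pn$; there the crude $3t$-trial union bound is too weak, and I instead exploit the fact that $|B'|=\Omega(k)$ provides that many independent trials, each succeeding with probability at least $1-e^{-pn/20}$. A Chernoff estimate on the number of successes then gives failure probability $e^{-\Omega(kpn)}=e^{-\omega(k/(d+1))}$, which is $o(e^{-k/(d+1)})$ and closes the gap.
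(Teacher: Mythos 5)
Your plan is sound and reaches the right conclusion, but it follows a genuinely different route from the paper and has some rough edges in the crucial final step. The paper does not try to first show $N_P^-(v)$ is large and then greedily route through it; instead it conditions on the arcs at $w$ (so $|N_P^+(w)|\ge n/18$) and splits on $k$. For $k\le 6t'$, it observes that each of the $\ge n/18-1$ arcs between $N_P^+(w)$ and $v$ points toward $v$ in $P$ with probability at least $p$ independently, and a Chernoff bound on this single binomial already gives $t'$ length-$2$ paths with failure probability $e^{-\Omega(np)}$ — no separate argument for $|N_P^-(v)|$ is needed. For $k>6t'$, it conditions on the arcs at $v$ to get $|N_P^-(v)|\ge k/3$, sets $U^+=N_P^+(w)\setminus(N_P^-(v)\cup\{v\})$ and $U^-=N_P^-(v)\setminus\{w\}$, and partitions $U^+$ and $U^-$ into $t'$ pieces each; the pair $(U_i^+,U_i^-)$ fails to supply an arc with probability at most $(1-p)^{n'k'/(2t')^2}=e^{-\Omega(npk)}$, and the $t'$ events involve disjoint edge sets so a union bound finishes. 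This double partition is the key device that buys both independence and the vertex-disjointness of the $t'$ resulting paths in one clean stroke.

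By contrast, your greedy process plus ``Chernoff on the number of successes'' is heading in the right direction but is not airtight as written. First, the trials you declare independent are not quite: for $y_i,y_j\in B'\cap A'$ the edge $y_iy_j$ appears in both probe sets, so the edge sets are not disjoint; the fix is exactly the paper's choice $U^+=N_P^+(w)\setminus(N_P^-(v)\cup\{v\})$ for the candidate pool. Second, even after that fix, the matching constraint (distinct $x_i$'s) means the number of successes is not literally a binomial; what you actually have is a stochastic domination argument (each step fails with probability at most $e^{-pn/20}$ conditional on the history of the greedy process), and that needs to be said explicitly before a Chernoff bound can be invoked. Third, when you invoke ``$|B'|=\Omega(k)$ independent trials'', you are silently using the stronger conclusion $|N_P^-(v)|\ge k/3$ from \ref{lem:preserve-degree}, not the $|B'|\ge 3t$ you stated two paragraphs earlier; worth making this explicit, since $|B'|\ge 3t$ alone is far too weak when $pn\to 0$ (e.g. $d\to\infty$), where a single step fails with probability $e^{-pn/20}\to 1$. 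None of these are fatal, but the paper's $t'\times t'$ block partition delivers the same $e^{-\Omega(npk)}$ bound with all of these issues handled automatically, and for the small-$k$ regime its direct count of arcs into $v$ is simpler than your separate case analysis on $\max(k,d)$ and on whether $pn\to\infty$.
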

\begin{proof}
We will only prove the statement where $v$ has indegree at least
$k$; the outdegree case is identical. By independence, we can condition
on the outcome of the perturbation on individual arcs. Condition on
the outcome for all arcs adjacent to $w$, and let $N_{P}^{+}\left(w\right)$
be the set of vertices to which there is an arc from $w$ in $P$.
By \ref{lem:preserve-degree}, we can assume $\left|N_{P}^{+}\left(w\right)\right|\ge n/18$.

We first prove the lemma for the case where $k\le6t'$. There are
at least $n'=n/18-1$ arcs between $N_{P}^{+}\left(w\right)$ and
$v$, each of which will be pointing towards $v$ in $P$ with probability
at least $p$ independently. By the Chernoff bound, the probability
less than $t'$ arcs will point from $N_{P}^{+}\left(w\right)$ to
$v$ in $P$ is $e^{-\Omega\left(np\right)}=o\left(e^{-k/\left(d+1\right)}\right)$.
So, with the required probability there are $t'$ suitable length-2
paths from $w$ to $v$.

We can now assume $k>6t'$. Condition on the result of the perturbation
for the arcs adjacent to $v$ (in addition to the arcs we have conditioned
on so far). Let $N_{P}^{-}\left(v\right)$ be the set of vertices
from which there is an edge into $v$ in $P$; by \ref{lem:preserve-degree},
we can assume $\left|N_{P}^{-}\left(v\right)\right|\ge k/3$.

Now, if $\left|N_{P}^{+}\left(w\right)\cap N_{P}^{-}\left(v\right)\right|\ge t'$
then there are $t'$ disjoint length-2 paths from $w$ to $v$ and
we are done. So we can assume $U^{+}=N_{P}^{+}\left(w\right)\backslash\left(N_{P}^{-}\left(v\right)\cup\left\{ v\right\} \right)$
has at least $n'=n/18-t'-1$ vertices, and $U^{-}=N_{P}^{-}\left(v\right)\backslash\left\{ w\right\} $
has at least $k'=k/3-1$ vertices (note $k'\ge 2t'$ by assumption).

Now, we would like to show that with the required probability there
is a set of $t'$ independent arcs from $U^{+}$ into $U^{-}$ in
$P$, which will give $t'$ suitable length-3 paths. Partition $U^{+}$(respectively
$U^{-}$) into subsets $U_{1}^{+},\dots,U_{t'}^{+}$ of size at least
$n'/\left(2t'\right)$ (respectively, subsets $U_{1}^{-},\dots,U_{t'}^{-}$
of size at least $k'/\left(2t'\right)$). Recall that $1-p\ge p$, so for each $i$, the probability
that there is no arc from $U_{i}^{+}$ into $U_{i}^{-}$ after the
perturbation is at most 
\[
\left(1-p\right)^{n'k'/\left(2t'\right)^{2}}=e^{-\Omega\left(npk\right)}=o\left(e^{-k/\left(d+1\right)}\right).
\]
We conclude that with the required probability, there is a set of $t'$ suitable independent
arcs, each between a pair $U_{i}^{+},U_{i}^{-}$.\end{proof}
\begin{lem}
\label{lem:lots-of-paths}Fix some $w\in S$. In the randomly perturbed tournament $P$, there are a.a.s.{}
$t$ internally vertex-disjoint paths from $w$ to each other vertex
(respectively, from each other vertex to $w$).\end{lem}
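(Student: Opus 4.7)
By symmetry it suffices to produce, a.a.s., $t$ internally vertex-disjoint paths from $w$ to every other vertex of $P$: the argument for paths into $w$ is identical after reversing all arcs and using that $w\in S$ also has out-degree at least $n/6$ in $T$. Write $k_v$ for the indegree of $v$ in $T$, so $k_v\ge d$ for every $v\neq w$. For each such $v$ we apply \ref{lem:path-from-home} with $k=k_v$: except on an event of probability at most $o(e^{-k_v/(d+1)})+o(1/n)$, $P$ contains $3t$ internally vertex-disjoint paths of length at most $3$ from $w$ to $v$, hence at least $t$ such paths.

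The plan is now a union bound over the $n-1$ choices of $v$. The $o(1/n)$ contributions sum to $o(1)$ immediately. To handle the exponential terms, we stratify $V(T)\setminus\{w\}$ into dyadic indegree bins $B_i=\{v:k_v\in[2^i(d+1),2^{i+1}(d+1))\}$ for $i\ge 0$. By \ref{lem:extreme-indegree-bound} (applied with threshold $2^{i+1}(d+1)$), we have $|B_i|=O(2^i(d+1))$, so the combined contribution of $B_i$ to the union bound is at most $O(2^i(d+1))\cdot o(e^{-2^i})$. Crucially, the exponent in \ref{lem:path-from-home} comes from an $e^{-\Omega(npk)}$ estimate, and the hypothesis $m=\omega(n/(d+1))$ forces $np(d+1)=\Theta(m(d+1)/n)\to\infty$; hence the genuine per-vertex decay is of the form $e^{-\Omega(\phi(n)k_v/(d+1))}$ for some function $\phi(n)\to\infty$. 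Substituting this sharper bound, the tail $\sum_{i\ge 0}O(2^i(d+1))\,e^{-\Omega(\phi(n)2^i)}$ is dominated by its first term and evaluates to $o(1)$. Together with the $o(1)$ from the $o(1/n)$ sum, Markov's inequality yields the claimed property.

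The main obstacle is that a black-box application of \ref{lem:path-from-home} gives per-vertex failure probability of order $o(1)$ when $k_v=\Theta(d)$, which is too weak to tolerate a union bound over $n$ vertices. The fix is to open up the $o$-notation and use the stronger $e^{-\Omega(\phi(n)k/(d+1))}$ rate that the hypothesis $m=\omega(n/(d+1))$ secretly provides; combined with the tight indegree count of \ref{lem:extreme-indegree-bound}, this collapses the sum over all $v$ to a convergent geometric series and makes the union bound go through.
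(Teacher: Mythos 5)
Your central idea is to ``open up the $o$-notation'' in \ref{lem:path-from-home}: the proof there really gives a per-vertex failure probability of order $e^{-\Omega(npk)}\asymp e^{-\Omega(\phi(n)\,k/(d+1))}$ with $\phi(n)=\Theta\bigl(np(d+1)\bigr)=\Theta\bigl(m(d+1)/n\bigr)\to\infty$, and you then run a dyadic union bound over indegree levels. This is a legitimate reading of that lemma, but the union bound does not close. In your binning, the dominant term is $i=0$, which contributes $O(d+1)\cdot e^{-\Omega(\phi(n))}$. For that to be $o(1)$ you would need $\phi(n)=\omega\bigl(\log(d+1)\bigr)$, and the hypothesis $m=\omega\bigl(n/(d+1)\bigr)$ guarantees only $\phi(n)\to\infty$, not any particular growth rate. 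For instance, take $d=n/3$ and $m=\log\log n$: then $\phi(n)=\Theta(\log\log n)$ while there are $\Theta(n)$ vertices with $k_v\in[d,2d]$, and your bound on the probability that some vertex is unsafe becomes $\Theta\bigl(n\,e^{-c\log\log n}\bigr)\gg 1$. So ``the first term evaluates to $o(1)$'' is simply false in the regime where $d$ is large and $m$ grows slowly, and the union bound gives nothing.

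The paper's proof is engineered precisely to avoid this. It does not try to show every vertex is safe. Instead it bounds the \emph{expected number} of unsafe vertices by summing $f(n)e^{-k/(d+1)}$ over indegree levels (using \ref{lem:extreme-indegree-bound} to control how many vertices can have indegree near each value $k$), obtaining $\E[|Q|]\le f(n)\,O(d+1)$ and, via Markov, $|Q|=o(d+1)$ a.a.s. When $d\le 12t$ this forces $|Q|=0$; when $d>12t$ it patches the $o(d+1)$ unsafe vertices via a second argument: each such $v$ retains indegree at least $d/3$ in $P$ (by \ref{lem:preserve-degree}), so $v$ has $3t$ safe in-neighbours distinct from $w$, and a maximality argument produces $t$ disjoint length-$\le 4$ paths from $w$ to $v$ through those safe in-neighbours. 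Your proposal is missing this patching step; without it, the result cannot be derived from \ref{lem:path-from-home} by a union bound alone. (A minor additional slip: your bins $B_i=\{v: k_v\in[2^i(d+1),2^{i+1}(d+1))\}$ omit the vertices with $k_v=d$, which must be accounted for.)
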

\begin{proof}
We only prove there are paths from $w$ to each other vertex; the
reverse case is identical. If there are $3t$ internally vertex-disjoint
paths from $w$ to $v$ in $P$ of length at most 3, then we say $v$
is \emph{safe}. It follows from \ref{lem:path-from-home} that a vertex
with indegree $k$ is safe with probability at least $1-f\left(n\right)e^{-k/\left(d+1\right)}+f\left(n\right)/n$,
for some $f\left(n\right)=o\left(1\right)$.

By \ref{lem:extreme-indegree-bound}, there are at most $2d+1$ vertices
with indegree $d$ in $T$, and the vertex with the $2k$-th smallest
indegree has indegree at least $k-1$. Let $Q$ be the set of non-safe
vertices, and note 
\begin{align*}
\E\left[\left|Q\right|\right] & \le\left(2d+1\right)f\left(n\right)e^{-d/\left(d+1\right)}+\sum_{k=d+1}^{n}2f\left(n\right)e^{-k/\left(d+1\right)}+nf\left(n\right)/n\\
 & =f\left(n\right)\left(O\left(d+1\right)+\frac{1}{1-e^{-1/\left(d+1\right)}}\right)\\
 & =f\left(n\right)O\left(d+1\right).
\end{align*}
(We have used the geometric series formula and the inequality $1-e^{-x}\ge\left(x\land1\right)/2$
for positive $x$).

By Markov's inequality, a.a.s.{} $\left|Q\right|\le\sqrt{f\left(n\right)}O\left(d+1\right)=o\left(d+1\right)$.
If $d\le12t$ then $\left|Q\right|=0$ for large $n$ and we are done. Otherwise, $d>12t$, and we will show that there are a.a.s.{} suitable paths of length at most 4 from $w$ to each other vertex.

By \ref{lem:preserve-degree} the perturbation a.a.s.{} reduces the degree of each vertex by a factor of at most 3 (the probability of this not occurring is $o\left(\sum_{k=0}^\infty e^{-k}\right)+o\left(1\right)=o\left(1\right)$), so every vertex $v\in Q$ a.a.s.{} has indegree at least $d/3$. We have $\left|Q\right|<d/12$ for large $n$, and we are assuming $d>12t$, so for every vertex $v$ we can choose $d/3-d/12-1=d/4-1\ge 3t$ safe in-neighbours $v_{1},\dots,v_{3t}$ different from $w$.

Now, fix some $v$ and consider a maximum-size collection $M$ of internally vertex-disjoint paths of length at most 4 from $w$ to $v$ in $P$. For the purpose of contradiction assume that $\left|M\right|<t$. There are fewer than $3t$ internal vertices in the paths in $M$, so there is some $v_{i}$ not in any path of $M$. Then, there are $3t$ internally vertex-disjoint paths from $w$ to $v_{i}$, so one of these does not contain any
internal vertex of a path in $M$. It follows that $w,\dots,v_{i},v$ is a path from $w$ to $v$ which is
internally vertex-disjoint from every path in $M$, contradicting maximality.
\end{proof}
It follows from \ref{lem:lots-of-paths} that in $P$, a.a.s.{} every vertex
outside $S$ has $t$ internally vertex-disjoint paths to and from every vertex in $S$.
If $P$ has this property and we delete $t-1$ vertices, then there is at least one vertex $w$
of $S$ remaining, and $w$ has least one path to and from every other
vertex. That is, $P$ is a.a.s.{} $t$-strongly connected. (In fact, we have also
proved that $P$ a.a.s.{} has diameter at most 8, since there is a path of length at most 4 between $w$ and any other vertex). Recall that a $t$-strongly connected tournament has $\Omega\left(\sqrt t\right)$ arc-disjoint Hamilton cycles, so we can conclude that $P$ a.a.s.{} has $q$ arc-disjoint Hamilton cycles for any $q=O\left(1\right)$.

\section{Concluding Remarks}

We have determined the amount of random perturbation typically required to make
a tournament, dense digraph or dense uniform hypergraph Hamiltonian. In
the process, we have proved a general lemma about pancyclicity in
highly connected digraphs, and demonstrated an interesting application
of the Szemer\'edi regularity lemma. In the hypergraph setting, there are some important questions
this paper leaves open.

First, we have only studied loose Hamiltonicity. The other most popular
notion of a hypergraph cycle is a \emph{tight} cycle, in which every
consecutive pair of edges in the cycle intersects in $k-1$ vertices.
More generally, an $\ell$-cycle has consecutive edges intersecting
in $k-\ell$ vertices. Also, we have only studied hypergraphs with
high $\left(k-1\right)$-degree, which is the strongest density assumption
we could make. There are a large variety of Dirac-type theorems for
different types of minimum degree and different types of cycles (see
\cite{RR10} for a survey), which would suggest that similar random
perturbation results are possible in these settings.

\medskip

\noindent{\bf Acknowledgement.} Parts of this work were carried out when the first author visited the Institute for Mathematical Research (FIM) of ETH Z\"urich, and also when the third author visited the School of Mathematical Sciences of Tel Aviv University, Israel. We would like to thank both institutions for their hospitality and for creating a stimulating research environment. We also thank the anonymous referee for their thorough proofreading.

\bibliographystyle{amsplain}
\providecommand{\bysame}{\leavevmode\hbox to3em{\hrulefill}\thinspace}
\providecommand{\MR}{\relax\ifhmode\unskip\space\fi MR }
\providecommand{\MRhref}[2]{%
  \href{http://www.ams.org/mathscinet-getitem?mr=#1}{#2}
}
\providecommand{\href}[2]{#2}

\end{document}